\definecolor{DeepPink}{RGB}{255,20,147} 
\definecolor{DeepPurple}{RGB}{160,5,220}
\newtheorem{theorem}{Theorem}[section]
\newtheorem{lemma}[theorem]{Lemma}
\newtheorem{thm}[theorem]{Theorem}
\newtheorem{coro}[theorem]{Corollary}
\newtheorem{prop}[theorem]{Proposition}
\theoremstyle{definition}
\newtheorem{remark}[theorem]{Remark}
\newtheorem{example}[theorem]{Example}
\newcommand{\R}{\mathbb{R}}
\newcommand{\Z}{\mathbb{Z}}
\newcommand{\Q}{\mathbb{Q}}
\newcommand{\angbra}[1]{\left\langle #1\right\rangle}
\newcommand{\comd}[1]{$$\xymatrix{#1}$$}
\newcommand{\inj}[0]{\ar@{^{(}->}}
\newcommand{\nbar}[1]{\overline{#1}}
\newcommand{\dcup}[0]{\displaystyle\bigcup}
\newcommand{\ph}{\varphi}
\newcommand{\toup}[1]{\stackrel{#1}{\longrightarrow}}
\newcommand{\Hom}{\operatorname{Hom}}
\newcommand{\frakZ}{\mathfrak{Z}}
\newcommand{\into}{\hookrightarrow}
\newcommand{\Spec}{\operatorname{Spec}}
\newcommand{\cl}{\operatorname{cl}}
\newcommand{\calA}{\mathcal{A}}
\newcommand{\calZ}{\mathcal{Z}}
\newcommand{\trop}{\operatorname{trop}}
\newcommand{\rec}{\operatorname{rec}}
\newcommand{\T}{\mathbb{T}}
\newcommand{\an}{\mathrm{an}}
\newcommand{\fncolon}{\colon}
\newcommand{\calX}{\mathcal{X}}
\newcommand{\calY}{\mathcal{Y}}
\newcommand{\calU}{\mathcal{U}}
\newcommand{\calV}{\mathcal{V}}
\newcommand{\frakX}{\mathfrak{X}}
\newcommand{\frakY}{\mathfrak{Y}}
\newcommand{\frakU}{\mathfrak{U}}
\newcommand{\Ray}[1]{#1_{\eta}}
\newcommand{\ray}[1]{\Ray{#1}}
\newcommand{\smallfan}{\Sigma}
\newcommand{\bigfan}{\Sigma'}
\newcommand{\smallcpx}{\Phi}
\newcommand{\bigcpx}{\Pi}
\newcommand{\htzero}[2]{#1|_{#2\times\{0\}}}
\newcommand{\htone}[2]{#1|_{#2\times\{1\}}}
\newcommand{\hti}[3]{#1|_{#2\times\{#3\}}}
\newcommand{\hticurrently}[3]{\hti{#1}{#2}{#3}}
\newcommand{\cpxToComplete}{\smallcpx}
\newcommand{\completeCpx}{\nbar{\cpxToComplete}}
\newcommand{\halfspaceSmallFan}{\Delta}
\newcommand{\halfspaceBigFan}{\Delta'}
\newcommand{\halfspaceCompleteFan}{\nbar{\halfspaceSmallFan}}
\newcommand{\Spf}{\operatorname{Spf}}
\newcommand{\mon}{\mathrm{mon}}
\newcommand{\affToricVar}[1]{U(#1)}
\newcommand{\ToricVar}[1]{Y(#1)}
\newcommand{\toricVar}[1]{\ToricVar{#1}}
\newcommand{\toricvar}[1]{\ToricVar{#1}}
\newcommand{\affTToricVar}[1]{\calU(#1)}
\newcommand{\affTToricVarPoly}[1]{\calU_{#1}}
\newcommand{\TToricVar}[1]{\calY(#1)}
\newcommand{\affFormalTToricVar}[1]{\frakU_{#1}}
\newcommand{\BerkSpace}[1]{\mathscr{#1}}
\begin{document}
\title{A construction of algebraizable formal models}
\author{Desmond Coles, Netanel Friedenberg}
\email{dcoles@utexas.edu, nfriedenberg@tulane.edu }
\begin{abstract}
Let $X$ be a variety over a complete nontrivially valued field $K$. We construct an algebraizable formal model for the analytification of $X$ in the case $X$ admits a closed embedding into a toric variety. By algebraizable we mean that the formal model is  given by the completion along the special fiber of a locally finite type flat scheme over the valuation ring $K^\circ$. We construct the formal model via the combinatorial theory of $\T$-toric varieties over $K^{\circ}$.
\end{abstract}
\maketitle

\section{Introduction}

Let $K$ be a complete nontrivially valued field with valuation ring $K^\circ$ and value group $\Gamma$. Let $X$ be a separated finite type $K$-scheme.
The Berkovich analytification $X^{\an}$ admits a formal model in the sense of Raynaud \cite[\S8.4, Proposition 7]{BoschBook}. The purpose of this article is to construct an \textit{algebraizable} formal model for $X$, i.e., a fomal model for $X$ given by completing an a separated, locally finite type, flat $K^\circ$-scheme, $\calX$ along its special fiber.

\begin{thm}\label{thm:ExistenceOfAlgebraizableFormalModels}
If $X$ admits a closed embedding in a normal toric variety over $K$, then $X^{\an}$ has an algebraizable formal model.
\end{thm}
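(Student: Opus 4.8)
The plan is to realize $X^{\an}$ as a closed analytic subspace inside the analytification of a toric variety, build a sufficiently nice integral model of that ambient toric variety over $K^\circ$ by means of the combinatorial theory of $\T$-toric varieties, and then take the scheme-theoretic closure of $X$ inside this integral model. Concretely, fix a closed embedding $X\into Y$, where $Y=\toricVar{\smallfan}$ is the normal toric variety over $K$ associated to a fan $\smallfan$ in $N_\R$ for a lattice $N$. The first task is to produce a separated, flat, locally finite type $K^\circ$-scheme $\calY$ with generic fiber $Y$ whose formal completion along the special fiber is a formal model of the \emph{entire} space $Y^{\an}$; the second task is to descend the embedding to the integral level and verify that the resulting closure inherits every required property.

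For the first task I would appeal to the combinatorial classification of $\T$-toric varieties over $K^\circ$, the essential point being to choose not merely any integral model but one whose formal completion recovers all of $Y^{\an}$ rather than a bounded subdomain. To this end I would select a locally finite, $\Gamma$-rational polyhedral complex $\smallcpx$ in $N_\R$ whose recession fan is $\smallfan$ and whose support equals $|\smallfan|$, arranged so that every cell is a bounded polyhedron, i.e. the subdivision extends out to infinity within each cone of $\smallfan$. Let $\calY=\TToricVar{\smallcpx}$ be the associated $\T$-toric scheme. Each affine chart $\affTToricVar{\sigma}$ is the spectrum of a finitely generated $K^\circ$-algebra, so $\calY$ is locally of finite type (though typically not of finite type, because of the infinitely many cells); it is flat since its toric coordinate rings are torsion-free over the valuation ring $K^\circ$; and separatedness follows from the combinatorics of $\smallcpx$ exactly as in the classical fan-to-variety dictionary. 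Its generic fiber is $Y=\toricVar{\rec\smallcpx}$, and, because the tubes over the bounded cells exhaust $Y^{\an}$, the formal completion $\frakY=\formalTToricVar{\smallcpx}$ along the special fiber is an admissible formal scheme whose Raynaud generic fiber is canonically all of $Y^{\an}$.

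With $\calY$ in hand, define $\calX\subseteq\calY$ to be the scheme-theoretic closure of $X\subseteq Y=\calY_{\eta}$, i.e. the closed subscheme cut out by the ideal sheaf of sections of $\mathcal{O}_{\calY}$ restricting to zero on $X$. Since the structure sheaf of $\calX$ injects into that of its generic fiber, it has no $K^\circ$-torsion and is therefore flat over $K^\circ$; as a closed subscheme of $\calY$ it is separated and locally of finite type over $K^\circ$; and its generic fiber is $X$ by construction. Completing along the special fiber produces a closed formal subscheme $\frakX=\widehat{\calX}\subseteq\frakY$, which, being the completion of the genuine $K^\circ$-scheme $\calX$, is algebraizable by construction.

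It then remains to identify the Raynaud generic fiber of $\frakX$ with $X^{\an}$. As formal completion and passage to the generic fiber are compatible with closed immersions, $\frakX_{\eta}$ is the closed analytic subspace of $\frakY_{\eta}=Y^{\an}$ cut out by $X$; since all of $Y^{\an}$ reduces into the special fiber of $\calY$ and $\calX\subseteq\calY$ is closed, every point of $X^{\an}$ reduces into the special fiber of $\calX$, so $\frakX_{\eta}=X^{\an}$ in full. Thus $\frakX$ is an algebraizable formal model of $X^{\an}$. The main obstacle I anticipate is the first task: over a non-discretely valued $K$ one must exhibit a polyhedral complex $\smallcpx$ for which $\calY$ is genuinely locally of finite type while simultaneously ensuring that the tubes over its bounded cells exhaust $Y^{\an}$, so that $\frakY_{\eta}=Y^{\an}$ and not merely a compact subdomain. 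This exhaustion, together with the finiteness of the individual charts, is precisely where the combinatorial theory of $\T$-toric varieties over $K^\circ$ does the real work, and reconciling these two demands is the crux of the argument.
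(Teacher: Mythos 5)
Your overall strategy---build a locally finite type $K^\circ$-model $\calY$ of the ambient toric variety whose formal completion has generic fiber all of $Y^{\an}$, then take the scheme-theoretic closure of $X$ and use flatness of the closure together with compatibility of completion with flat closed subschemes---is exactly the paper's strategy, and your treatment of the second half (flatness via torsion-freeness, generic fiber of the closure equal to $X$, inheritance of the model property by closed flat subschemes) is sound. The genuine gap is in the combinatorial specification of $\calY$, which is the crux you yourself flag. First, your prescription is internally inconsistent: a polyhedral complex all of whose cells are bounded has trivial recession fan, so it cannot have recession fan $\smallfan$ unless $\smallfan=\{0\}$; relatedly, the cones $c(P)$ over bounded cells alone give a $\T$-toric scheme with generic fiber the torus $T$, not $Y$, so the cones $\sigma\times\{0\}$, $\sigma\in\smallfan$, must be adjoined to the fan. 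Second, and fatally, the exhaustion claim fails: the tube over a bounded cell $P\subset N_{\R}$ is $\trop^{-1}(P)$, and since $\cl_{N_{\R}(\smallfan)}(P)=P$ never meets the boundary strata of the partial compactification $N_{\R}(\smallfan)$, the union of these tubes is contained in $\trop^{-1}(N_{\R})$ and misses every point of $Y^{\an}$ lying over $N_{\R}(\smallfan)\sdrop N_{\R}$ (for $Y=\mathbb{A}^1$ this already misses the point $x=0$); taking the support to be $|\smallfan|$ rather than all of $N_{\R}$ additionally misses $\trop^{-1}(N_{\R}\sdrop|\smallfan|)$ (for $\mathbb{A}^1$, all points with $|x|>1$). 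So your $\frakY$ is a well-defined admissible formal scheme, but its generic fiber is a proper analytic subdomain of $Y^{\an}$, and the resulting $\frakX$ is not a model of $X^{\an}$.

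What is actually needed, and what the paper uses, is nearly the opposite of bounded cells: a \emph{complete} $\Gamma$-rational polyhedral complex $\completeCpx$ with $|\completeCpx|=N_{\R}$, whose recession cones are exactly the cones of $\smallfan$ (so the cells must be unbounded in the directions of $\smallfan$), which is locally finite \emph{in $N_{\R}(\smallfan)$}---not merely in $N_{\R}$ or in its own support---and whose vertices lie in $N_{\Gamma}$. The unbounded cells are precisely what force $\cl_{N_{\R}(\smallfan)}(P)$ to reach the boundary strata, so that $\bigcup_{P}\cl_{N_{\R}(\smallfan)}(P)=N_{\R}(\smallfan)$ and hence $\ray{\frakY}=\trop^{-1}(N_{\R}(\smallfan))=Y^{\an}$; and local finiteness in $N_{\R}(\smallfan)$ is what makes the tubes a quasinet so that the gluing identification holds at all (the paper gives counterexamples showing that local finiteness in $N_{\R}$, or combinatorial local finiteness, does not suffice). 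The existence of such a completion is nontrivial and is the cited Theorem 1.1 of the companion combinatorics paper; the paper also handles non-discrete, non-divisible $\Gamma$ in its Theorem \ref{thm:CompleteTheRaynaudFiber} by a finite extension of $K$, which in the proof of Theorem \ref{thm:ExistenceOfAlgebraizableFormalModels} is avoided by first completing the fan $\smallfan$ (Oda) to produce an equivariant completion of the trivial model. Your proposal delegates ``the real work'' to the combinatorial theory, but the input you ask that theory to provide is the wrong one, and no choice of bounded-cell complex supported on $|\smallfan|$ can work.
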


The hypothesis that $X$ admits a closed embedding in a normal toric variety is satisfied for any quasiprojective $K$-scheme; see, for example, the proof of \cite[Lemma 4.3]{LimitOfTrops}. 
If $K$ is algebraically closed, then 
W\l{}odarczyk's embedding theorem 
\cite[Theorem A]{WlodarczykEmbedding} tells us that for any normal variety $X$ the hypothesis is satisfied if and only if any two points of $X$ have a common open affine neighborhood. From an analytic perspective this hypothesis is also natural to consider because if $X$ embeds into a toric variety then $X^{\an}$ can be realized as inverse limit of tropicalizaitons \cite{LimitsOfTrop}.

For suitable $K^\circ$-models $\calX$ of $X$, there is a natural identification of the generic fiber $\ray{\frakX}$ of the formal completion $\frakX$ of $\calX$ along its special fiber with an analytic domain in $X^{\an}$. 
We prove Theorem \ref{thm:ExistenceOfAlgebraizableFormalModels} by building such a $K^\circ$-model for the ambient toric variety such that the analytic domain is the whole analytification. This is done by extending the ambient toric variety $Y$ to the trivial model over $K^\circ$, $\calY$, and then we prove the following theorem which allows us to modify the special fiber to achieve our desired result. In the following theorem $\T$ is a split torus over $K^\circ$ with  cocharacter lattice $N$, and a $\T$-toric variety is a normal, finite type, $\T$-equivaraint, $K^\circ$-model of a toric variety over $K$.

\begin{thm}\label{thm:CompleteTheRaynaudFiber}
Let $\calY$ be a normal $\T$-toric variety with generic fiber $Y$. There is a finite separable totally ramified extension $L/K$ of valued fields, a normal $\T_{L^\circ}$-toric scheme $\nbar{\calY}$ which is locally of finite type over $L^\circ$, and a $\T_{L^\circ}$-equivariant open immersion $\calY_{L^\circ}\into\nbar{\calY}$ such that
\begin{itemize}
\item the induced map $Y_L=\calY_{L}\to\nbar{\calY}_{L}$ on generic fibers is an isomorphism of $L$-varieties and
\item the natural map $\ray{\nbar{\frakY}}\to(\nbar{\calY}_{L})^{\an}\cong(Y_{L})^{\an}$ is an isomorphism of analytic spaces
\end{itemize}
where $\nbar{\frakY}$ denotes the formal completion of $\nbar{\calY}$ along its special fiber. 
If $\Gamma$ is discrete or divisible, or, more generally, if $\calY$ admits a normal equivariant completion, then we may take $L=K$.
\end{thm}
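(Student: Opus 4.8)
The plan is to translate the statement into the combinatorial dictionary for $\T$-toric schemes and reduce it to a polyhedral completion problem. Write $\smallcpx$ for the $\Gamma$-rational polyhedral complex in $N_\R$ corresponding to $\calY$. I would use three features of this dictionary: an equivariant open immersion corresponds to the inclusion of a subcomplex; the generic fiber $Y$ is the toric variety whose fan is the recession fan $\rec(\smallcpx)$; and the Raynaud generic fiber $\ray{\frakY}$ is the preimage $\trop^{-1}$ of the closure of the support $|\smallcpx|$ under the tropicalization map $\trop$ of $Y^{\an}$ into the tropical toric variety attached to $\rec(\smallcpx)$. Consequently $\ray{\frakY}=Y^{\an}$ precisely when $|\smallcpx|=N_\R$, whereas the generic fiber is unchanged precisely when the recession fan is unchanged. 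Since base change to $L^\circ$ leaves $\smallcpx$ and its recession fan intact, the theorem reduces to the following combinatorial statement: after replacing $\Gamma$ by the value group $\Gamma_L$ of a suitable finite separable totally ramified extension $L/K$, the complex $\smallcpx$ extends to a locally finite $\Gamma_L$-rational polyhedral complex $\completecpx\supseteq\smallcpx$ with $|\completecpx|=N_\R$ and $\rec(\completecpx)=\rec(\smallcpx)$.

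To construct $\completecpx$ I would first produce a complete $\Gamma_L$-rational polyhedral complex $\smallcpx^{c}\supseteq\smallcpx$, that is, one with $|\smallcpx^{c}|=N_\R$; by the dictionary this is nothing but a normal equivariant completion of $\calY_{L^\circ}$. Its recession fan $\rec(\smallcpx^{c})$ is a complete fan containing $\rec(\smallcpx)$, so $\smallcpx^{c}$ has the desired support but the wrong generic fiber: its unbounded cells point in recession directions absent from $\rec(\smallcpx)$. To remove these directions without shrinking the support, I would refine $\smallcpx^{c}$ by cutting each cell whose recession cone is not already a cone of $\rec(\smallcpx)$ along a locally finite family of $\Gamma_L$-rational hyperplanes transverse to the offending directions, chosen so as to leave the cells of $\smallcpx$ themselves untouched. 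Each such cut trades one unbounded cell for infinitely many cells whose recession cones are the corresponding bounded faces; performing this in every unwanted direction yields a locally finite $\Gamma_L$-rational complex $\completecpx$ with $\rec(\completecpx)=\rec(\smallcpx)$, still satisfying $|\completecpx|=|\smallcpx^{c}|=N_\R$ and containing $\smallcpx$ as a subcomplex. Letting $\nbar{\calY}$ be the $\T_{L^\circ}$-toric scheme attached to $\completecpx$, the inclusion $\smallcpx\subseteq\completecpx$ gives the open immersion $\calY_{L^\circ}\into\nbar{\calY}$, the equality of recession fans gives the isomorphism $\nbar{\calY}_L\cong Y_L$ on generic fibers, and $|\completecpx|=N_\R$ gives $\ray{\nbar{\frakY}}\cong(Y_L)^{\an}$.

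It remains to produce the completion $\smallcpx^{c}$ and to pin down $L$. Completing a $\Gamma$-rational polyhedral complex to one with full support is a polyhedral analogue of the Ewald--Ishida completion of rational fans, and the only obstruction is the $\Gamma$-rationality of the finitely many new rays and vertices it introduces. If $\Gamma$ is divisible every point of $N_\R$ is $\Gamma$-rational, and if $\Gamma$ is discrete one is in the classical setting of toric schemes over a discrete valuation ring; in either case the completion is carried out over $\Gamma$ itself, a normal equivariant completion of $\calY$ already exists, and one may take $L=K$. In general the completion only requires coordinates in $\Q\otimes\Gamma$, and as it is determined by finitely many of them one clears denominators by taking $L/K$ finite and totally ramified with $\Gamma_L=\tfrac1e\Gamma$ for a single suitable $e$; separability is then arranged by choosing $L$ generated by a separable root of a uniformizer, or via Krasner's lemma. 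This is also why the hypothesis that $\calY$ already admits a normal equivariant completion lets one take $L=K$.

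The step I expect to be the main obstacle is the refinement, rather than the completion. The cuts reducing the recession cones of $\smallcpx^{c}$ into $\rec(\smallcpx)$ must be made compatibly across cells so that the result is again a polyhedral complex, they must fix the subcomplex $\smallcpx$ exactly, they must preserve $\Gamma_L$-rationality, and they must stay locally finite where bounded cells accumulate against the cells carrying the recession directions of $\rec(\smallcpx)$. Checking that a single locally finite family of $\Gamma_L$-rational parallel hyperplanes can be chosen to satisfy all of these constraints at once, and that the recession fan of the outcome is exactly $\rec(\smallcpx)$ and neither larger nor smaller, is the delicate combinatorial core where I expect most of the work to lie.
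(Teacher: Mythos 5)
Your high-level strategy---translate the theorem into the Gubler--Soto dictionary and reduce it to completing $\smallcpx$ to a complex with full support and prescribed recession behavior---is the same as the paper's, which delegates exactly this completion problem to a cited combinatorial theorem. But your reduction has a genuine gap: the dictionary item you treat as unconditional, namely $\ray{\frakY}\cong\trop^{-1}\bigl(\cl_{N_{\R}(\smallfan)}|\smallcpx|\bigr)$, is false in general. The paper proves it (Lemma \ref{lemma:ClosuresLocFinInSuppImpliesRaynaudInBerkovich} and Corollary \ref{coro:LocFinImpliesRaynaudInBerkovich}) only under the hypothesis that the closures $\cl_{N_{\R}(\smallfan)}(P)$, $P\in\smallcpx$, form a locally finite family in the partial compactification $N_{\R}(\smallfan)$---not merely that $\smallcpx$ is locally finite in $N_{\R}$---and it exhibits counterexamples otherwise: in Example \ref{example:CombLocFinDoesntImplyRaynaudInBerkovich} the formal generic fiber is disconnected while the target is connected, and in the example following Corollary \ref{coro:LocFinImpliesRaynaudInBerkovich} the complex \emph{is} locally finite in $N_{\R}$ yet the identification still fails, because cells accumulate at a point of a boundary stratum of $N_{\R}(\smallfan)$. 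Consequently the combinatorial statement you reduce to (``$\completecpx$ locally finite, $|\completecpx|=N_{\R}$, recession fan unchanged'') does not imply $\ray{\nbar{\frakY}}\cong Y^{\an}$. Worse, your proposed construction is prone to precisely this failure mode: slicing an unbounded cell whose recession cone is too large by an infinite family of transverse hyperplanes produces an infinite stack of cells whose shadows in a boundary stratum can accumulate---this ``staircase'' pattern is exactly how the paper's counterexample complexes arise. So the delicate core you flag must be carried out against the stronger condition of local finiteness in $N_{\R}(\smallfan)$, a condition your write-up never isolates; securing it is the content of the completion theorem the paper cites from \cite{combopaper}.

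Two further problems. First, the generic fiber of $\calY(\halfspaceSmallFan)$ is $Y(\smallfan)$ with $\smallfan=\htzero{\halfspaceSmallFan}{N_{\R}}$, and $\smallfan$ may strictly contain $\{\rec P\mid P\in\smallcpx\}$; so your normalization $\rec(\completecpx)=\rec(\smallcpx)$ is the wrong one---one needs $\{\rec P\mid P\in\completecpx\}=\smallfan$, or else $\calY\into\nbar{\calY}$ is not an open immersion of the given toric scheme and the generic fiber genuinely shrinks. Second, your construction of $L$ does not work as stated: a non-discretely valued field has no uniformizer, and no finite extension can achieve $\Gamma_L=\tfrac{1}{e}\Gamma$ when $\Gamma/e\Gamma$ is infinite (e.g.\ $\Gamma=\bigoplus_p\Z\sqrt{p}$), since $[\Gamma_L:\Gamma]\leq[L:K]$. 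What is true, and what the paper uses, is that only finitely many values need denominators cleared; but the existence of a finite \emph{separable totally ramified} extension accomplishing this (equivalently, after which a normal equivariant completion exists) is itself a nontrivial theorem, cited from \cite{GammaAdmissibleCompletions}, not a routine Krasner-type argument.
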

\noindent Note that by Theorem 1.1 of \cite{GammaAdmissibleCompletions} any normal $\T$-toric variety admits an equivariant completion when $\Gamma$ is discrete or divisible.

Our approach to proving Theorem \ref{thm:CompleteTheRaynaudFiber} is combinatorial. 
In \cite{GublerSoto}, Gubler and Soto classified $\T$-toric varieties in terms of certain fans, called $\Gamma$-admissible fans, in the half-space $N_{\R}\times\R_{\geq0}$; here $N_{\R}:=N\otimes_{\Z}\R$. 
The data of a $\Gamma$-admissible fan in $N_{\R}\times\R_{\geq0}$ is equivalent to that of a rational fan $\smallfan$ in $N_{\R}$ and a $\Gamma$-rational polyhedral complex $\smallcpx$ such that the recession cone $\rec P$ of any $P\in\smallcpx$ is in $\smallfan$. This allows us to reduce Theorem \ref{thm:CompleteTheRaynaudFiber} to Theorem 1.1 of \cite{combopaper}.

The paper proceeds as follows. In \S\ref{sec:Combinatorics} we recall the classification of $\T$-toric varieties by $\Gamma$-admissible fans introduced in \cite{GublerSoto}; we also prove some technical results extending known theorems from the finite type case to the locally finite type case. In \S\ref{sec:Raynaud} we recall some generalities on constructing generic fibers of formal schemes and gluing $K$-analytic spaces. Finally, in \S\ref{sec:Proofs} we discuss how to compute generic fibers of completions of $\T$-toric varieties. We then prove
Theorems \ref{thm:ExistenceOfAlgebraizableFormalModels} and \ref{thm:CompleteTheRaynaudFiber}.

\section{Background on $\T$-toric varieties}\label{sec:Combinatorics}

In this section we review the combinatorial classification of $\T$-toric varieties, following \cite{GublerSoto}. We also discuss a slight extension of this theory from finite type schemes to locally finite types, which we will use for Theorem \ref{thm:CompleteTheRaynaudFiber}.

\subsection{$\T$-toric schemes}\label{subsec:BackgroundTToric}

Let $\T$ be a split torus over $K^\circ$ with character lattice $M$ and cocharacter lattice $N$. A \emph{$\T$-toric scheme} is an integral, separated scheme 
$\calY$ 
flat over $K^\circ$ together with an open embedding 
$\T_K\into\calY_K$ 
of the generic fiber of $\T$ into the generic fiber of 
$\calY$ 
such that the action of $\T_K$ on itself by translation extends to an action of $\T$ on 
$\calY$. 
A \emph{$\T$-toric variety} is a $\T$-toric scheme which is of finite type over $K^\circ$. Every normal $\T$-toric variety arises via a combinatorial construction, which we now review. Throughout this paper we follow the notational conventions of \cite{combopaper}. Let $A$ be any additive subgroup of $\R$ (possibly all of $\R$); set $N_{A}:=N\otimes_{\Z} A$ and consider this as a subgroup of $N_{\R}$.

Let $\sigma$ be a cone in $N_{\R}\times\R_{\geq0}$, i.e., $\sigma$ is a cone in the vector space $N_{\R}\times\R$ such that $\sigma\subseteq N_{\R}\times\R_{\geq0}$. The cone $\sigma$ is \emph{$\Gamma$-admissible} if it is pointed and can be written in the form 
$$\sigma=\{(w,t)\in N_{\R}\times\R_{\geq0}\mid \angbra{u_i,w}+\gamma_it\geq0\text{ for }i=1,\ldots,m\}$$ 
for some $u_1,\ldots,u_m\in M$ and $\gamma_1,\ldots,\gamma_m\in\Gamma$. By pointed we mean that $\sigma$ contains no lines. Because $\Gamma\neq\{0\}$ every face of a $\Gamma$-admissible cone is $\Gamma$-admissible. We say that a fan, $\Delta$, in $N_{\R}\times\R_{\geq0}$ is \emph{$\Gamma$-admissible} if all of its cones are $\Gamma$-admissible.

We can characterize $\Gamma$-admissibility by looking at the intersections $\sigma\cap(N_{\R}\times\{0\})$ and $\sigma\cap(N_{\R}\times\{1\})$. Let $\pi\fncolon N_{\R}\times\R_{\geq0}\to N_{\R}$ be the projection onto the first factor. If $\sigma$ meets $N_{\R}\times\{1\}$ then $\sigma$ is $\Gamma$-admissible if and only if $\pi(\sigma\cap(N_{\R}\times\{1\}))$ is a pointed, $\Gamma$-rational polyhedron in $N_{\R}$. Recall that a polyhedron, $P$, is \emph{$\Gamma$-rational} if it can be written:
$$P=\{w\in N_{\R}\mid \angbra{u_i,w}\geq \gamma_i\text{ for }i=1,\ldots,m\}$$ 
for some $u_1,\ldots,u_m\in M$ and $\gamma_1,\ldots,\gamma_m\in\Gamma$. By pointed we mean that $P$ contains no lines. Conversely, if $P$ is a pointed, $\Gamma$-rational polyhedron, then the closed cone 
$$c(P):=\nbar{\{(tw,t)\mid t\in\R_{\geq0}, w\in P\}}\subseteq N_{\R}\times\R_{\geq0}$$ 
over $P$ is $\Gamma$-admissible. Note that $c(P)\cap (N_{\R}\times \{1 \})=P\times \{1\}$; this gives a correspondence between $\Gamma$-admissible cones that meet $N_{\R}\times \{1\}$ and $\Gamma$-rational polyhedra in $N_{\R}$. 
If $\sigma$ does not meet $N_{\R}\times\{1\}$ then $\sigma\subseteq N_{\R}\times\{0\}$ and $\sigma$ is $\Gamma$-admissible if and only if $\pi(\sigma)$ is a pointed rational cone in $N_{\R}$ (recall that rational means generated by elements of $N$). Let $\Delta$ be a fan in $N_{\R}\times \R_{\geq 0}$. For $i=0,1$ let $\hticurrently{\Delta}{N_{\R}}{i}$ denote the set $\{\pi( \sigma\cap (N_{\R}\times \{i\}) )\mid \sigma \in \Delta\}$. From this discussion we see that a fan $\Delta$ is $\Gamma$-admissible if and only if $\hticurrently{\Delta}{N_{\R}}{1}$ is a $\Gamma$-rational polyhedral complex and $\hticurrently{\Delta}{N_{\R}}{0}$ is a rational fan. In \cite{RecessionFan} the authors study when one can define a $\Gamma$-admissible fan given a polyhedral complex $N_{\R}\times \{1\}$.

Given a $\Gamma$-admissible cone $\sigma$ in $N_{\R}\times\R_{\geq0}$, the affine normal $\T$-toric scheme corresponding to $\sigma$ is $\affTToricVar{\sigma}:=\Spec K[M]^\sigma$ where
$$K[M]^\sigma:=\left\{\sum_{u\in M}\alpha_u\chi^u\in K[M]\;\middle|\; \text{for all }(w,t)\in\sigma\text{ and }u\in M,\,\angbra{u,w}+v(\alpha_u)t\geq0\right\}.$$ 
If $\Gamma$ is discrete then Gordan's lemma shows that $K[M]^\sigma$ is a finitely generated $K^\circ$-algebra. If $\Gamma$ is not discrete then $K[M]^\sigma$ is finitely generated as a $K^\circ$-algebra if and only if all of the vertices of $\sigma\cap N_{\R}\times\{1\}$ are in $N_{\Gamma}\times\{1\}$; see \cite[Proposition 6.9]{GublerGuideTrop}. 

If $\tau$ is a face of a $\Gamma$-admissible cone $\sigma$ then the inclusion $K[M]^\sigma\subset K[M]^\tau$ induces a $\T$-equivariant open immersion $\affTToricVar{\tau}\into\affTToricVar{\sigma}$. Given a $\Gamma$-admissible fan $\halfspaceSmallFan$ in $N_{\R}\times\R_{\geq0}$, the normal $\T$-toric scheme $\calY(\halfspaceSmallFan)$ corresponding to $\halfspaceSmallFan$ is obtained by gluing the schemes $\affTToricVar{\sigma}$ for $\sigma\in\halfspaceSmallFan$ along the open immersions $\affTToricVar{\tau}\into\affTToricVar{\sigma}$ for $\tau\leq\sigma$. The following theorem, which classifies the $\T$-toric varieties, was first shown in \cite[IV, \S 3]{ToroidalEmbeddings} in the case where $\Gamma$ is discrete and was then shown in \cite[Theorem 3]{GublerSoto} in the case where $\Gamma$ is not discrete.

\begin{thm}\label{thm:ClassifyNormalTToricVars}
If $\Gamma$ is discrete then $\halfspaceSmallFan\mapsto\calY(\halfspaceSmallFan)$ gives a bijection from the set of finite $\Gamma$-admissible fans in $N_{\R}\times\R_{\geq0}$ to the set of isomorphism classes of normal $\T$-toric varieties. If $\Gamma$ is not discrete then $\halfspaceSmallFan\mapsto\calY(\halfspaceSmallFan)$ gives a bijection from the set of finite $\Gamma$-admissible fans $\halfspaceSmallFan$ in $N_{\R}\times\R_{\geq0}$ such that all vertices of $\htone{\halfspaceSmallFan}{N_{\R}}$ are in $N_{\Gamma}$ to the set of isomorphism classes of normal $\T$-toric varieties.
\end{thm}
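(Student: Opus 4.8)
The plan is to follow the classical correspondence between fans and normal toric varieties (as in \cite[\S IV]{ToroidalEmbeddings}), adapting each step to the relative situation over $K^{\circ}$, where the cone $\sigma$ must simultaneously encode the generic fiber (via its height-$0$ part) and the special fiber (via its height-$1$ slice). The excerpt already provides the forward construction $\halfspaceSmallFan\mapsto\calY(\halfspaceSmallFan)$ together with the gluing along the open immersions $\affTToricVar{\tau}\into\affTToricVar{\sigma}$, so what remains is to check that the output is a \emph{normal} $\T$-toric \emph{variety}, to prove injectivity by recovering $\halfspaceSmallFan$ intrinsically from $\calY(\halfspaceSmallFan)$, and to prove surjectivity by showing that every normal $\T$-toric variety arises from a fan. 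I would first settle the affine case and then pass to fans.

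For the affine case, normality of $\affTToricVar{\sigma}$ is the statement that $K[M]^{\sigma}$ is integrally closed in its fraction field $K(M)$. This holds because $K[M]^{\sigma}=\bigcap_{(w,t)\in\sigma}\{f\mid v_{(w,t)}(f)\geq0\}$ is an intersection of valuation rings, where $v_{(w,t)}$ is the monomial (Gauss) valuation $\sum_{u}\alpha_{u}\chi^{u}\mapsto\min_{u}(\angbra{u,w}+t\,v(\alpha_{u}))$, and any intersection of valuation rings with common fraction field is integrally closed. Flatness is immediate since $K[M]^{\sigma}$ is torsion free over $K^{\circ}$, and finite type follows from Gordan's lemma when $\Gamma$ is discrete and from \cite[Proposition 6.9]{GublerGuideTrop} under the vertex hypothesis otherwise. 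For injectivity, I would recover $\sigma$ from an arbitrary affine normal $\T$-toric variety $\calU=\Spec R$ as follows. The torus action equips $R$ with an $M$-grading $R=\bigoplus_{u\in M}I_{u}\chi^{u}$, where each $I_{u}$ is a $K^{\circ}$-submodule of $K$; since $K^{\circ}$ is a valuation ring, every nonzero $I_{u}$ is upward closed under $v$, and normality together with finiteness forces it to have the closed form $\{\alpha\mid v(\alpha)\geq c_{u}\}$ with $c_{u}\in\Gamma$. The containments $I_{u}I_{u'}\subseteq I_{u+u'}$ give $c_{u+u'}\leq c_{u}+c_{u'}$, so $\sigma:=\{(w,t)\mid\angbra{u,w}+c_{u}t\geq0\text{ whenever }I_{u}\neq0\}$ is a convex $\Gamma$-admissible cone with $\affTToricVar{\sigma}\cong\calU$. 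As $\sigma$ is read off intrinsically from $R$, this establishes a bijection between $\Gamma$-admissible cones and affine normal $\T$-toric varieties.

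The passage to fans is then largely formal. Gluing the charts of a $\Gamma$-admissible fan along the open immersions already produces $\calY(\halfspaceSmallFan)$, and its separatedness reflects exactly the fan axiom that any two cones meet in a common face; injectivity follows because the invariant affine charts, and hence their cones, are recovered from $\calY(\halfspaceSmallFan)$ by the affine case. For surjectivity I would show that an arbitrary normal $\T$-toric variety $\calY$ admits a covering by $\T$-invariant affine opens. Each such open is an affine normal $\T$-toric variety, hence of the form $\affTToricVar{\sigma_{i}}$, and comparing the charts on overlaps shows that the $\sigma_{i}$ assemble into a $\Gamma$-admissible fan $\halfspaceSmallFan$ with $\calY\cong\calY(\halfspaceSmallFan)$.

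I expect the principal obstacle to be the existence of this $\T$-invariant affine cover: it is an equivariant, Sumihiro-type statement over the base $K^{\circ}$ rather than over a field, and one must produce enough $\T$-semi-invariant regular functions to separate orbits while controlling their valuations along the special fiber. A second, more technical difficulty is confined to the non-discrete case, where $K^{\circ}$ is non-Noetherian and $K[M]^{\sigma}$ need not be finitely generated; there the finite type condition must be imposed by hand, which is precisely where the hypothesis that all vertices of $\htone{\halfspaceSmallFan}{N_{\R}}$ lie in $N_{\Gamma}$ enters through \cite[Proposition 6.9]{GublerGuideTrop}, and one must verify that the cone produced in the backward construction satisfies this vertex condition so as to remain among the finitely generated $K^{\circ}$-algebras.
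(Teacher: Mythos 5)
The paper does not prove this theorem at all: it is recalled as a known result, attributed in the preceding sentence to \cite[IV, \S 3]{ToroidalEmbeddings} when $\Gamma$ is discrete and to \cite[Theorem 3]{GublerSoto} when it is not. Your proposal is therefore not competing with an argument in the paper but attempting to reconstruct the proof from those references, and in outline it does follow their route (affine classification via monomial valuations and weight decompositions, then globalization via invariant affine covers). However, as a proof it has a genuine gap at its central point: surjectivity rests entirely on the claim that every normal $\T$-toric variety is covered by $\T$-invariant affine open subschemes. You correctly identify this as ``the principal obstacle'' but give no argument for it, not even a strategy. This is precisely \cite[Theorem 2]{GublerSoto}, a Sumihiro-type theorem over the valuation ring $K^\circ$, and it is the deepest input to the classification --- the bulk of Gubler--Soto's paper is devoted to it. The difficulty is not cosmetic: when $\Gamma$ is not discrete, $K^\circ$ is non-Noetherian, and the classical Sumihiro argument over a field (linearize a line bundle, embed equivariantly into projective space) does not transfer; one cannot simply ``produce enough semi-invariant functions.'' Flagging the obstacle does not discharge it, so the proof is incomplete exactly where the theorem is hard. (Note that the paper itself, in Proposition \ref{prop:ValRingSumihiroPlus}, also treats this as a black box, reducing the locally-finite-type case to \cite[Theorem 2]{GublerSoto}.)

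The affine step also asserts several points that carry real content. First, you claim normality and finite generation force each nonzero graded piece $I_u$ to be of the closed form $\{\alpha \mid v(\alpha)\geq c_u\}$ with $c_u\in\Gamma$; one must rule out the open form $\{\alpha \mid v(\alpha)> c_u\}$ and values $c_u\in\R\sdrop\Gamma$ (cuts of $\Gamma$), and this is exactly where finite type over $K^\circ$ enters and where, in the non-discrete case, the vertex condition in $N_{\Gamma}$ on $\htone{\halfspaceSmallFan}{N_{\R}}$ originates --- it cannot simply be ``imposed by hand'' at the end as your last paragraph suggests. Second, the cone $\sigma$ you define is cut out by one inequality for each $u$ with $I_u\neq0$, i.e., by infinitely many inequalities; polyhedrality (finitely many inequalities with $u_i\in M$, $\gamma_i\in\Gamma$, which is what $\Gamma$-admissibility demands) again requires finite generation and an argument. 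Third, $\affTToricVar{\sigma}\cong\calU$ needs the equality $R=K[M]^{\sigma}$, not just $R\subseteq K[M]^{\sigma}$; this is where normality is actually used, via integral dependence of the missing monomials on $R$, and it does not follow from the (correct) observation that intersections of valuation rings are integrally closed. In short, your proposal is a sound road map of the known proof, with the right division into affine classification plus an equivariant cover theorem, but the key technical results are named rather than proved.
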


Recall that $\Q\Gamma\subset\R$ denotes the divisible hull of $\Gamma$. 
For any $\Gamma$-admissible fan $\halfspaceSmallFan$ in $N_{\R}\times\R_{\geq0}$, all of the vertices of $\htone{\halfspaceSmallFan}{N_{\R}}$ are in $N_{\Q\Gamma}$. In particular, if $\Gamma$ is divisible then $\halfspaceSmallFan\mapsto\calY(\halfspaceSmallFan)$ gives a bijection from the set of finite $\Gamma$-admissible fans in $N_{\R}\times\R_{\geq0}$ to the set of isomorphism classes of normal $\T$-toric varieties.

Let $\halfspaceSmallFan$ be a $\Gamma$-admissible fan in $N_{\R}\times\R_{\geq0}$. For any $\sigma\in\halfspaceSmallFan$, the generic fiber of $\affTToricVar{\sigma}$ is the affine toric variety over $K$ associated to $\sigma\cap(N_{\R}\times\{0\})$, viewed as a rational cone in $N_{\R}$. So if $\smallfan:=\htzero{\halfspaceSmallFan}{N_{\R}}$ is finite, then the generic fiber of $\TToricVar{\halfspaceSmallFan}$ is the toric variety $\ToricVar{\smallfan}$ over $K$ associated to $\smallfan$. More generally, if $\Sigma$ is not finite, then the generic fiber of $\TToricVar{\halfspaceSmallFan}$ is the toric scheme $\ToricVar{\smallfan}$ locally of finite type over $K$ associated to $\smallfan$ as in \cite[Theorem 4.1]{OdaTorusEmbeddings}.

\subsection{$\T$-toric schemes locally of finite type}\label{subsec:LocFinTypeTToric}
We now extend Theorem \ref{thm:ClassifyNormalTToricVars} to a classification of normal $\T$-toric schemes locally of finite type over $K^\circ$. 
First, we need some general results about actions of group schemes over a base. 

\begin{lemma}\label{lemma:ActionOfUnivOpenGpIsOpen}
Let $S$ be a scheme and let $G$ be a group $S$-scheme which is universally open. If $\ph\fncolon G\times_{S}X\to X$ is an action of $G$ on an $S$-scheme $X$, then $\ph$ is open.
\end{lemma}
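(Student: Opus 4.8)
The plan is to exhibit $\ph$ as the composite of an isomorphism with a projection that is open precisely because $G\to S$ is universally open. Consider the \emph{shear morphism}
$$\Psi:=(\mathrm{pr}_1,\ph)\fncolon G\times_S X\to G\times_S X,$$
that is, the morphism into the fiber product whose first component is the projection $\mathrm{pr}_1\fncolon G\times_S X\to G$ and whose second component is the action $\ph$. On the functor of points, for any $S$-scheme $T$ the group $G(T)$ acts on the set $X(T)$, and $\Psi$ is the map $(g,x)\mapsto(g,g\cdot x)$.

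First I would check that $\Psi$ is an isomorphism. Let $\iota\fncolon G\to G$ be the inversion morphism and set
$$\Psi':=\bigl(\mathrm{pr}_1,\;\ph\circ(\iota\circ\mathrm{pr}_1,\mathrm{pr}_2)\bigr),$$
which on $T$-points sends $(g,x)\mapsto(g,g^{-1}\cdot x)$. Using the identity and associativity axioms of the action, one verifies that $\Psi'$ is a two-sided inverse of $\Psi$; since this can be checked on $T$-points for every $S$-scheme $T$, Yoneda's lemma gives that $\Psi$ is an isomorphism.

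Next, observe that $\ph=\mathrm{pr}_2\circ\Psi$, where $\mathrm{pr}_2\fncolon G\times_S X\to X$ is the second projection: on points $\mathrm{pr}_2(\Psi(g,x))=\mathrm{pr}_2(g,g\cdot x)=g\cdot x=\ph(g,x)$. The morphism $\mathrm{pr}_2$ is exactly the base change of the structure morphism $G\to S$ along $X\to S$. Since $G\to S$ is universally open, its base change $\mathrm{pr}_2$ is open. Therefore $\ph=\mathrm{pr}_2\circ\Psi$ is the composite of an isomorphism (hence an open map) with an open map, so $\ph$ is open.

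The argument is essentially formal, and there is no serious obstacle. The only point requiring care is the verification that $\Psi$ is an isomorphism, which rests on the action axioms and is cleanest to carry out on the functor of points; the other delicate bookkeeping item is correctly identifying $\mathrm{pr}_2$ as the base change of $G\to S$, since that is what licenses invoking the universal openness hypothesis.
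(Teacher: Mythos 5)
Your proof is correct, but it is worth noting that it does something the paper does not: the paper's ``proof'' of this lemma is purely a citation to Mumford's GIT, Ch.\ 0 \S 2, Remark (4), plus the observation that the standing hypotheses of that remark are not needed for the relevant claim. Your shear-morphism argument is exactly the standard argument underlying that remark: factor $\ph=\mathrm{pr}_2\circ\Psi$ where $\Psi=(\mathrm{pr}_1,\ph)$ is an automorphism of $G\times_S X$ (with inverse $(g,x)\mapsto(g,g^{-1}\cdot x)$, verified on $T$-points via the action axioms and Yoneda), and $\mathrm{pr}_2$ is open because it is the base change of the universally open morphism $G\to S$ along $X\to S$. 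Each step checks out: $\Psi$ is an isomorphism, hence a homeomorphism; universal openness is stable under base change, so $\mathrm{pr}_2$ is open; and a composite of open maps is open. What your version buys is self-containedness and transparency: it shows that the only hypotheses used are universal openness of $G\to S$ and the group-action axioms, which is precisely the point the paper has to argue informally when it says the extra standing hypotheses in GIT's remark can be dropped. What the paper's route buys is brevity, at the cost of asking the reader to inspect an external reference and strip away its ambient assumptions.
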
\begin{proof}
This claim appears in \cite[Ch.\ 0 \S2 Remark (4)]{GIT}. In that remark there are other standing hypotheses, but those hypotheses are not needed for the relevant part of the remark. 
\end{proof}


\begin{prop}\label{prop:CoverByInvariantQCOpens}
Let $S$ be a scheme and let $G$ be a group $S$-scheme which is universally open and quasicompact over $S$. 
If $\ph\fncolon G\times_{S}X\to X$ is an action of $G$ on an $S$-scheme $X$, then every point of $X$ is contained in a $G$-invariant quasicompact open subscheme of $X$.
\end{prop}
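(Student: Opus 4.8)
The plan is to take the \emph{orbit} of a quasicompact open neighborhood of the given point. Fix $x\in X$ and choose an affine open $U\subseteq X$ containing $x$; being affine, $U$ is quasicompact. Write $p_2\fncolon G\times_S X\to X$ for the projection, which is the base change of $G\to S$ along $X\to S$ and hence quasicompact, so that $p_2^{-1}(U)$ is quasicompact. By Lemma~\ref{lemma:ActionOfUnivOpenGpIsOpen} the action map $\ph$ is open. I would then set
$$W:=\ph(p_2^{-1}(U))\subseteq X,$$
and claim that $W$ is the desired $G$-invariant quasicompact open neighborhood of $x$.

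The easy properties come first. Since $\ph$ is open, $W$ is open; since $W$ is the continuous image of the quasicompact set $p_2^{-1}(U)$, it is quasicompact. To see $x\in W$, use the identity section $e\fncolon S\to G$: the action axiom gives $\ph\circ(e,\mathrm{id}_X)=\mathrm{id}_X$, where $(e,\mathrm{id}_X)\fncolon X\to G\times_S X$ is the induced section of $p_2$. Hence $U=\ph\big((e,\mathrm{id}_X)(U)\big)\subseteq\ph(p_2^{-1}(U))=W$, so in particular $x\in W$.

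The substance of the proof is that $W$ is $G$-invariant, i.e.\ $\ph(p_2^{-1}(W))=W$; here I would exploit associativity of the action. Set $Z:=G\times_S G\times_S X$ with projection $q\fncolon Z\to X$ to the last factor, so that $q^{-1}(U)=G\times_S G\times_S U$. Consider the two maps $a:=\mathrm{id}_G\times_S\ph\fncolon Z\to G\times_S X$, $(g,h,x)\mapsto(g,hx)$, and $b:=m\times_S\mathrm{id}_X\fncolon Z\to G\times_S X$, $(g,h,x)\mapsto(gh,x)$, where $m$ is the multiplication of $G$. Associativity of $\ph$ is exactly the identity $\ph\circ a=\ph\circ b$. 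I would then establish the two set-level equalities
$$a(q^{-1}(U))=p_2^{-1}(W)\qquad\text{and}\qquad b(q^{-1}(U))=p_2^{-1}(U).$$
Granting these, associativity yields
$$\ph(p_2^{-1}(W))=\ph(a(q^{-1}(U)))=\ph(b(q^{-1}(U)))=\ph(p_2^{-1}(U))=W,$$
which is the claimed invariance and completes the proof.

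The main obstacle is justifying the two displayed equalities at the level of the underlying topological spaces of these fibre products, where one cannot naively manipulate points as ordered tuples. For $b$ this only requires that base change preserves surjectivity: since $m\fncolon G\times_S G\to G$ admits the section $(\mathrm{id}_G,e)$ it is surjective, and $b$ restricted to $q^{-1}(U)=(G\times_S G)\times_S U$ is the base change $m\times_S\mathrm{id}_U$, hence surjects onto $G\times_S U=p_2^{-1}(U)$. For $a$ one needs the sharper statement that images commute with base change: writing $a$ as the base change of $\ph$ along $G\to S$, the equality $a(q^{-1}(U))=p_2^{-1}(\ph(G\times_S U))=p_2^{-1}(W)$ follows because for a morphism $f\fncolon A\to B$ of $S$-schemes, a subset $T\subseteq A$, and a base change $P\to S$, one has $(\mathrm{id}_P\times_S f)(\mathrm{pr}_A^{-1}(T))=\mathrm{pr}_B^{-1}(f(T))$. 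The inclusion $\subseteq$ is formal from $\mathrm{pr}_B\circ(\mathrm{id}_P\times_S f)=f\circ\mathrm{pr}_A$; the reverse inclusion reduces, fibre by fibre over $B$, to the surjectivity of the base change of a field extension $\Spec\kappa(\beta)\to\Spec\kappa(b)$, which I would record as a short lemma. These two facts are the only points where the scheme-theoretic, rather than purely set-theoretic, nature of the fibre products intervenes.
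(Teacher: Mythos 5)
Your proof is correct and takes essentially the same approach as the paper: both define the candidate neighborhood as the set-theoretic image $\ph(G\times_S U)$ of an affine open neighborhood under the action map, obtaining openness from Lemma~\ref{lemma:ActionOfUnivOpenGpIsOpen} and quasicompactness from continuity. The paper's proof simply leaves the containment of $x$ and the $G$-invariance (which you verify carefully via the identity section and associativity, respectively) as implicit standard facts about orbits of subsets.
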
\begin{proof}
Given $x\in X$, let $U'$ be an affine open neighborhood of $x$. Then $G\times_{S} U'$ is a quasicompact scheme. Let $U$ be the set-theoretic image $\ph(G\times_S U')$. Then by Lemma \ref{lemma:ActionOfUnivOpenGpIsOpen}, $U$ is an open neighborhood of $x$. Since $G\times_S U'$ is quasicompact and $\ph$ is continuous, $U$ is quasicompact.
\end{proof}

\begin{remark}\label{remark:FlatAndFinPresHyp}
By \cite[Th\'{e}or\`{e}me 2.4.6]{EGAIV2}, the hypotheses of Proposition \ref{prop:CoverByInvariantQCOpens} are satisfied if $G$ is flat and of finite presentation over $S$.
\end{remark}

We now apply Proposition \ref{prop:CoverByInvariantQCOpens} to the case of $\T$-toric schemes locally of finite type over $K^\circ$.

\begin{prop}\label{prop:ValRingSumihiroPlus}
Any normal $\T$-toric scheme which is locally of finite type over $K^\circ$ is covered by $\T$-invariant affine open subschemes.
\end{prop}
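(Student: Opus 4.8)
The plan is to combine Proposition \ref{prop:CoverByInvariantQCOpens} with the finite-type classification in Theorem \ref{thm:ClassifyNormalTToricVars}. Let $\calY$ be a normal $\T$-toric scheme locally of finite type over $K^\circ$ and fix a point $y\in\calY$. Since $\T$ is a split torus over $K^\circ$, it is isomorphic to $\mathbb{G}_{m,K^\circ}^{n}$ and hence flat and of finite presentation over $S=\Spec K^\circ$; by Remark \ref{remark:FlatAndFinPresHyp} it is therefore universally open and quasicompact over $S$. Proposition \ref{prop:CoverByInvariantQCOpens} then yields a $\T$-invariant quasicompact open subscheme $\calU\subseteq\calY$ containing $y$.

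The next step is to check that $\calU$ is itself a normal $\T$-toric variety. As an open subscheme of the integral, separated, normal scheme $\calY$, it is integral, separated and normal, and it is flat over $K^\circ$ because $\calY$ is. Being quasicompact and open in a scheme locally of finite type over $K^\circ$, it is of finite type over $K^\circ$. It remains to produce the torus embedding: since $\calU$ is nonempty and flat over $K^\circ$, its generic fiber $\calU_K$ is a nonempty open subscheme of the toric variety $\calY_K$ that is invariant under the $\T_K$-action. Any nonempty invariant open of a toric variety contains the dense torus orbit, so $\T_K\subseteq\calU_K$ and the required open embedding $\T_K\into\calU_K$ is the restriction of the one for $\calY$. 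Thus $\calU$ is a normal $\T$-toric variety.

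Finally, apply Theorem \ref{thm:ClassifyNormalTToricVars}: $\calU\cong\TToricVar{\halfspaceSmallFan}$ for a finite $\Gamma$-admissible fan $\halfspaceSmallFan$, and by construction $\calU$ is obtained by gluing the $\T$-invariant affine open subschemes $\affTToricVar{\sigma}$ for $\sigma\in\halfspaceSmallFan$. In particular $y$ lies in some $\affTToricVar{\sigma}$. Since $\calU$ is open in $\calY$, this $\affTToricVar{\sigma}$ is a $\T$-invariant affine open subscheme of $\calY$ containing $y$. As $y$ was arbitrary, such subschemes cover $\calY$.

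I expect the only real subtlety to be the verification that $\calU$ satisfies the definition of a $\T$-toric variety, specifically confirming that its generic fiber contains the torus. The points to record carefully are the flatness of $\calU$ over $K^\circ$, which guarantees a nonempty generic fiber, together with the fact that every nonempty invariant open of a toric variety meets, and hence contains, the dense orbit. Everything else is a direct application of the two cited results.
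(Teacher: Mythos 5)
Your proof is correct, and its core reduction is the same as the paper's: use Proposition \ref{prop:CoverByInvariantQCOpens} together with Remark \ref{remark:FlatAndFinPresHyp} to place an arbitrary point of $\calY$ inside a $\T$-invariant quasicompact open subscheme $\calU$, note that $\calU$ is then a normal $\T$-toric variety, and conclude by finite-type theory. Where you diverge is in the last step: the paper cites the Sumihiro-type theorem \cite[Theorem 2]{GublerSoto} (a normal $\T$-toric \emph{variety} is covered by $\T$-invariant affine opens) directly, whereas you invoke the classification theorem (Theorem \ref{thm:ClassifyNormalTToricVars}) to identify $\calU$ with some $\TToricVar{\halfspaceSmallFan}$, which by construction is glued from the invariant affine opens $\affTToricVar{\sigma}$. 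This is logically sound and non-circular within the paper, since Proposition \ref{prop:ValRingSumihiroPlus} is only used later to prove Theorem \ref{thm:ClassifyNormalTToricSchemesLocOfFinType}, not Theorem \ref{thm:ClassifyNormalTToricVars}; but it is a heavier hammer, because in Gubler--Soto the classification theorem is itself deduced from their Theorem 2, so you are extracting the covering statement from a result whose proof already contains it. A genuine merit of your write-up is that you verify what the paper asserts without comment, namely that $\calU$ really is a normal $\T$-toric variety: the only nontrivial point is that its generic fiber, being a nonempty $\T_K$-invariant open subscheme of $\calY_K$ (nonempty by flatness, or simply because $\calU$ contains the generic point of the integral scheme $\calY$), must contain the dense torus orbit, and your argument for this is correct.
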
\begin{proof}
Let $\calY$ be a normal $\T$-toric scheme which is locally of finite type over $K^\circ$. Since $\T$ is flat and of finite presentation over $K^\circ$, Proposition \ref{prop:CoverByInvariantQCOpens} and Remark \ref{remark:FlatAndFinPresHyp} tell us that any point $y\in\calY$ is contained in a $\T$-invariant quasicompact open subscheme $\calU$. Then $\calU$ is a normal $\T$-toric variety and so, by \cite[Theorem 2]{GublerSoto} $y$ is contained in a $\T$-invariant affine open subscheme.
\end{proof}

\begin{remark}
Proposition \ref{prop:CoverByInvariantQCOpens} can also be used to remove the hypothesis that $X$ is quasicompact from other theorems about the existence of certain types of open covers, such as \cite[Corollary 3.11]{SumihiroCompletionII}. 
To relax the finite type hypothesis in that result to the hypothesis that $X$ is locally of finite type, the only additional fact needed is that $X$ is covered by invariant open subschemes that are quasicompact over the base scheme $S$. 
However, this is immediate if $S$ is quasiseparated, as is the case in the aforementioned result, where $S$ is assumed to be noetherian. 
\end{remark}

\begin{thm}\label{thm:ClassifyNormalTToricSchemesLocOfFinType}
If $\Gamma$ is discrete then $\halfspaceSmallFan\mapsto\calY(\halfspaceSmallFan)$ gives a bijection from the set of $\Gamma$-admissible fans in $N_{\R}\times\R_{\geq0}$ to the set of isomorphism classes of normal $\T$-toric schemes locally of finite type over $K^\circ$. If $\Gamma$ is not discrete then $\halfspaceSmallFan\mapsto\calY(\halfspaceSmallFan)$ gives a bijection from the set of $\Gamma$-admissible fans $\halfspaceSmallFan$ in $N_{\R}\times\R_{\geq0}$ such that all vertices of $\htone{\halfspaceSmallFan}{N_{\R}}$ are in $N_{\Gamma}$ to the set of isomorphism classes of normal $\T$-toric schemes locally of finite type over $K^\circ$.
\end{thm}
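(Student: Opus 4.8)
The plan is to bootstrap from the finite type classification (Theorem \ref{thm:ClassifyNormalTToricVars}) using the local structure established in Proposition \ref{prop:ValRingSumihiroPlus}. The map $\halfspaceSmallFan\mapsto\calY(\halfspaceSmallFan)$ is already defined for arbitrary (possibly infinite) $\Gamma$-admissible fans via the gluing construction, so the content is to show this is a bijection onto isomorphism classes of normal $\T$-toric schemes locally of finite type. I would treat injectivity and surjectivity separately, reducing each to the finite type case by restricting to finite subfans and the corresponding quasicompact invariant opens.

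For \emph{surjectivity}, let $\calY$ be a normal $\T$-toric scheme locally of finite type over $K^\circ$. By Proposition \ref{prop:ValRingSumihiroPlus}, $\calY$ is covered by $\T$-invariant affine open subschemes $\calU_j$. Each $\calU_j$ is a normal $\T$-toric variety, hence by Theorem \ref{thm:ClassifyNormalTToricVars} equals $\affTToricVar{\sigma_j}$ for a unique $\Gamma$-admissible cone $\sigma_j$ (in the non-discrete case, one with all height-one vertices in $N_\Gamma$). The key gluing compatibility is that for two such invariant affine opens, their intersection is again a $\T$-invariant open, and the combinatorial classification forces $\calU_i\cap\calU_j$ to correspond to a common face structure, so that $\sigma_i$ and $\sigma_j$ meet along a common face in $N_{\R}\times\R_{\geq0}$. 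Here I would lean on the fact that a $\T$-equivariant open immersion between affine normal $\T$-toric varieties is induced by a face inclusion of $\Gamma$-admissible cones, which is part of the finite type theory from \cite{GublerSoto}. Assembling the $\sigma_j$ and all their faces yields a $\Gamma$-admissible fan $\halfspaceSmallFan$ with $\calY\cong\calY(\halfspaceSmallFan)$; the fan may be infinite precisely because $\calY$ need only be locally of finite type.

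For \emph{injectivity}, suppose $\calY(\halfspaceSmallFan)\cong\calY(\bigfan)$ as $\T$-toric schemes. Any $\T$-equivariant isomorphism restricts to a bijection between the $\T$-orbits, and more usefully between the maximal invariant affine opens, which correspond to the maximal cones. Since each cone and its faces generate a finite subfan, and the corresponding invariant quasicompact open is a genuine $\T$-toric variety, I can invoke the injectivity half of Theorem \ref{thm:ClassifyNormalTToricVars} on these finite pieces to match cones of $\halfspaceSmallFan$ with cones of $\bigfan$, compatibly with the face relations. Running over all cones shows $\halfspaceSmallFan=\bigfan$.

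The main obstacle I anticipate is the gluing bookkeeping in surjectivity: verifying that the cones $\sigma_j$ extracted from an arbitrary invariant affine cover actually fit together into a \emph{fan}, i.e.\ that pairwise intersections of the $\calU_j$ correspond to a common face rather than merely to some smaller invariant open. This requires knowing that intersections of invariant affine opens inside a separated normal $\T$-toric scheme are again invariant affine and correspond to face intersections; separatedness of $\calY$ is what rules out pathological gluings and guarantees the cone data is consistent. Once this compatibility is in hand, the infinite fan is just the union of the finite subfans coming from the cover, and the finite type theorem does the rest. The non-discrete case adds only the vertex condition, which is local and so passes transparently between $\calY$ and its invariant affine opens.
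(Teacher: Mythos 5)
Your proposal is correct and is essentially the paper's own argument: the paper proves this theorem by running the proof of \cite[Theorem 3]{GublerSoto} verbatim, with Proposition \ref{prop:ValRingSumihiroPlus} substituted for \cite[Theorem 2]{GublerSoto} as the source of the $\T$-invariant affine open cover, and your sketch simply unrolls that same reduction (invariant affine cover, affine classification by $\Gamma$-admissible cones, separatedness forcing face-compatible gluing, assembly into a possibly infinite fan). The gluing bookkeeping you flag as the main obstacle is exactly the content of the Gubler--Soto argument that the paper inherits by citation rather than reproving.
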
\begin{proof}
The proof is exactly the same as the proof of \cite[Theorem 3]{GublerSoto}, except that we use Proposition \ref{prop:ValRingSumihiroPlus} rather than \cite[Theorem 2]{GublerSoto} to see that the $\T$-toric scheme in question has an open cover by affine $\T$-toric varieties.
\end{proof}

\section{Raynaud's formal models}\label{sec:Raynaud}

We briefly recall the gluing procedure for $K$-analytic spaces from \cite[\S 1.3]{Berkovich93Etale}, the construction of generic fibers of formal schemes, and the relation to Berkovich analytification.
We refer the reader to \cite{BerkovichBook} and \cite{Berkovich93Etale} for background on Berkovich analytic spaces and to \cite{TemkinIntroToBerkovichSpaces} for an introduction to the subject with many exercises. 

Suppose we have a family $(\BerkSpace{X}_i)_{i\in I}$ of $K$-affinoid spaces and for each $i,j\in I$ we have an affinoid domain $\BerkSpace{X}_{ij}\subset \BerkSpace{X}_{i}$ and an isomorphism $\phi_{ji}\fncolon\BerkSpace{X}_{ij}\to \BerkSpace{X}_{ji}$ such that $\BerkSpace{X}_{ii}=\BerkSpace{X}_i$, $\phi_{ji}(\BerkSpace{X}_{ij}\cap \BerkSpace{X}_{ik})=\BerkSpace{X}_{ji}\cap \BerkSpace{X}_{jk}$, and $\phi_{ki}=\phi_{kj}\circ\phi_{ji}$ on $\BerkSpace{X}_{ij}\cap \BerkSpace{X}_{ik}$. 
Assume that, for each $i\in I$, all but finitely many of the $\BerkSpace{X}_{ij}$s are empty. 
A \emph{gluing of the $\BerkSpace{X}_{i}$s along the $\BerkSpace{X}_{ij}$s} is a $K$-analytic space $\BerkSpace{X}$ together with maps $\phi_i\fncolon\BerkSpace{X}_i\to \BerkSpace{X}$ identifying $\BerkSpace{X}_i$ with an affinoid domain in $\BerkSpace{X}$ and satisfying
\begin{itemize}
\item $\phi_i(\BerkSpace{X}_{ij})=\phi_i(\BerkSpace{X}_i)\cap\phi_j(\BerkSpace{X}_j)$,
\item $\phi_i=\phi_j\circ\phi_{ji}$ on $\BerkSpace{X}_{ij}$, and
\item $\{\phi_i(\BerkSpace{X}_i)\mid i\in I\}$ is a quasinet on $\BerkSpace{X}$, i.e., every point $x\in \BerkSpace{X}$ has a neighborhood of the form $\bigcup_{j=1}^n\phi_{i_j}(\BerkSpace{X}_{i_j})$ with $x\in\bigcap_{j=1}^n\phi_{i_j}(\BerkSpace{X}_{i_j})$.
\end{itemize}
By \cite[Proposition 1.3.3(b)]{Berkovich93Etale}, such a gluing exists and is unique up to unique isomorphism. 
Furthermore, \cite[Proposition 1.3.2]{Berkovich93Etale} tells us that maps from $\BerkSpace{X}$ to any $K$-analytic space $\BerkSpace{Y}$ are naturally in bijection with families $(f_i)_{i\in I}$ of maps $f_i\fncolon\BerkSpace{X}_i\to \BerkSpace{Y}$ such that $f_i=f_j\circ\phi_{ji}$ on $\BerkSpace{X}_{ij}$.

In order to ensure that the above gluing process can be used to construct the generic fiber of a formal scheme as a Berkovich space we will need to impose a topological condition on our formal schemes. 
Following \cite[\S 8.2 Definition 12]{BoschBook}, we say that a topological space $X$ is \emph{quasi-paracompact} if $X$ admits a cover $\{U_i\}_{i\in I}$ consisting of quasicompact open sets such that, for each $i\in I$, $U_i\cap U_j=\emptyset$ for all but finitely many $j\in I$.

We briefly recall the definition of admissible formal $K^\circ$-schemes. 
A topological $K^\circ$-algebra $A$ is \emph{admissible} if there is some nonzero $\alpha\in K^{\circ\circ}$ such that $A$ is $\alpha$-torsion free, has the $\alpha$-adic topology, and is isomorphic as a $K^\circ$-algebra to a quotient of $K^\circ\angbra{\zeta_1,\ldots,\zeta_n}$, the $\alpha$-adic completion of the polynomial ring $K^\circ[\zeta_1,\ldots,\zeta_n]$ \cite[\S7.3, Definition 3 and Corollary 5]{BoschBook}. Each of these properties is independent of the choice of $\alpha$. A formal $K^\circ$-scheme is \emph{admissible} if it has an open cover by formal spectra of admissible $K^\circ$-algebras.

We now recall the construction of the generic fiber of a suitable formal scheme as a Berkovich analytic space. We refer the reader to \cite[\S 7.4]{BoschBook} for details and an accessible presentation of the analogous construction as a rigid space. 
If $A$ is an admissible $K^\circ$-algebra then $K\otimes_{K^\circ}A$ is a $K$-affinoid algebra. This gives rise to a functor from affine admissible formal schemes over $K^\circ$ to $K$-affinoid spaces, sending $\Spf A$ to the Berkovich spectrum $\mathscr{M}(K\otimes_{K^\circ}A)$ of $K\otimes_{K^\circ}A$. This functor sends inclusions of affine open formal subschemes to inclusions of $K$-affinoid domains. This can be extended to a functor that sends a separated, quasi-paracompact, admissible formal $K^\circ$-scheme $\frakX$ to a $K$-analytic spaces $\ray{\frakX}$, as follows. Because $\frakX$ is quasi-paracompact and admissible, there is a cover $\{\frakU_i\mid i\in I\}$ of $\frakX$ by formal spectra $\frakU_i=\Spf A_i$ of admissible $K^\circ$-algebras $A_i$ such that, for each $i\in I$, $\frakU_i\cap\frakU_j=\emptyset$ for all but finitely many $j\in I$. 
Moreover, because $\frakX$ is separated, each $\frakU_i\cap\frakU_j$ is also the formal spectrum of an admissible $K^\circ$-algebra $A_{ij}$. We obtain $\ray{\frakX}$ by gluing the $\mathscr{M}(K\otimes_{K^\circ}A_i)$s along the $\mathscr{M}(K\otimes_{K^\circ}A_{ij})$s. 
We call $\ray{\frakX}$ the \emph{generic fiber} of $\frakX$. 
Note that if $\frakX$ is an admissible formal scheme which is not quasi-paracompact, the generic fiber may not exist as a Berkovich analytic space.

Given a $K$-analytic space $\BerkSpace{Z}$, a \emph{formal model of $\BerkSpace{Z}$} consists of a separated, quasi-paracompact, admissible formal $K^\circ$-scheme $\mathfrak{Z}$ and an isomorphism $\BerkSpace{Z}\toup{\sim}\ray{\mathfrak{Z}}$.

Let $\calX$ be a scheme which is separated, flat, and locally of finite type over $K^\circ$. Suppose that the special fiber $\calX_s$ of $\calX$ is quasi-paracompact. 
Let $\frakX$ be the formal completion of $\calX$ along the special fiber, by which we mean the $\alpha$-adic completion for any nonzero $\alpha\in K^{\circ\circ}$, and let $X=\calX_K$ be the generic fiber of $\calX$. 
Note that the hypotheses above guarantee that $\frakX$ is a separated, quasi-paracompact, admissible formal $K^\circ$-scheme. There is a natural map $\iota=\iota_{\calX}\fncolon\ray{\frakX}\to X^{\an}$ from the generic fiber of $\frakX$ to the analytification of $X$, defined as follows. 
If $\calX=\Spec A$ is affine then there is a natural identification $\iota$ of $\ray{\frakX}$ with the affinoid domain 
$$\{x\in X^{\an}\mid \text{for all } f\in A,\,|f(x)|\leq 1\};$$ 
see \cite[\S 5]{BerkovichFormalSchemesI} or \cite[\S 4.13]{GublerGuideTrop}. 
Furthermore, if $\calU$ is an affine open subscheme of $\calX$ with generic fiber $U$ and formal completion $\frakU$ along its special fiber, then the diagram 
\comd{
\ray{\frakU}\ar[r]^{\iota_{\calU}}\ar[d]&U^{\an}\ar[d]\\
\ray{\frakX}\ar[r]^{\iota_\calX}&X^{\an}
}
commutes, where the vertical maps are induced by the inclusion $\calU\into\calX$. 
For an arbitrary $\calX$, the previous sentence gives us that for any open affine subschemes $\calU_1,\calU_2\subset\calX$ the maps 
$\ray{(\frakU_{i})}\toup{\iota_{\calU_i}}U_i^{\an}\to X^{\an}$ 
for $i=1,2$ agree on $\ray{(\frakU_1\cap\frakU_2)}$, so the universal property of gluing gives us the map $\iota\fncolon\ray{\frakX}\to X^{\an}$. 

If $\calU_1=\Spec A_1$ and $\calU_2=\Spec A_2$ are open affine subschemes of $\calX$ then because $\calX$ is separated we have that $\calU_{12}:=\calU_1\cap\calU_2$ is affine with coordinate ring $A_{12}$ which is generated as a $K^\circ$-algebra by the images of $A_1$ and $A_2$. 
Note that $\iota(\ray{(\frakU_1)})\cap\iota(\ray{(\frakU_2)})$ is contained in $U_1^{\an}\cap U_2^{\an}=U_{12}^{\an}$ where it takes the form 
$$\{x\in U_{12}^{\an}\mid \text{for all }f\in A_1\cup A_2,\,|f(x)|\leq 1\}.$$
Since $A_1\cup A_2$ generates $A_{12}$, we get that $\iota(\ray{(\frakU_1)})\cap\iota(\ray{(\frakU_2)})=\iota(\ray{(\frakU_{12})})$.

\begin{remark}\label{remark:FlatSubschemeCompletion} Let $\calX$ have generic fiber $X$.
We will say that $\calX$ satisfies condition (*) if there is a collection $\{\calU_i\mid i\in I\}$ of affine open subsets of $\calX$ such that $\bigcup_{i\in I}\calU_i$ contains the special fiber of $\calX$, for each $i\in I$, $\frakU_i$ meets only finitely many $\frakU_j$ for $j\in I$ and $\{\iota(\ray{(\frakU_i)})\mid i\in I\}$ is a quasinet on $\iota(\ray{\frakX})\subset X^\an$. 
In this situation $\iota(\ray{\frakX})$ is an analytic domain in $X^{\an}$ and, in light of the previous paragraph, we see that $\iota(\ray{\frakX})$ is a gluing of the $\ray{(\frakU_i)}$s along the $\ray{(\frakU_i\cap\frakU_j)}$s, so $\iota$ gives an isomorphism from $\ray{\frakX}$ to $\iota(\ray{\frakX})$. If $\calX$ satisfies condition (*) with the collection $\{\calV_i\mid i\in I\}$, then for any closed subscheme $\calZ\subset\calX$ which is flat over $K^\circ$, the collection $\{\calU_i:=\calV_i\cap\calZ\mid i\in I\}$ shows that $\calZ$ satisfies condition (*). 
Moreover, in this case we have $\iota_{\calZ}(\ray{\frakZ})=Z^{\an}\cap\iota_{\calX}(\ray{\frakX})$ in $X^{\an}$. Here $Z$ is the generic fiber of $\calZ$, and $\frakZ$ is the formal completion of $\calZ$ along its special fiber. However, in the absence of condition (*) there are examples where $\iota$ is not an isomorphism on to an analytic domain in $X^{\an}$; see Example \ref{example:CombLocFinDoesntImplyRaynaudInBerkovich}. 
\end{remark}

\section{Proofs of main theorems}\label{sec:Proofs}

We now prove Theorems \ref{thm:ExistenceOfAlgebraizableFormalModels} and \ref{thm:CompleteTheRaynaudFiber}.
Let $\calY = \calY (\Delta)$ be a normal $\T$-toric scheme which is locally of finite type over $K^\circ$, given by a $\Gamma$-admissible fan $\Delta$ in $N_{\R}\times\R_{\geq0}$.
We begin by discussing how one computes the generic fiber of the completion of a  $\calY$ along its special fiber. We then give the proofs of the main theorems.

We begin by discussing some finiteness conditions on $\Delta$. Let $\smallfan:=\htzero{\halfspaceSmallFan}{N_{\R}}$ and $\smallcpx:=\htone{\halfspaceSmallFan}{N_{\R}}$. Firstly, notice that the generic fiber $\calY$ is a variety if and only if $\Sigma$ is finite, in which case the generic fiber is the toric variety given by $\Sigma$, $Y(\Sigma)$. Because we are only interested in $\calY$ with generic fiber a variety, we will assume $\Sigma$ is finite. Secondly, Theorem \ref{thm:CompleteTheRaynaudFiber} is proved by modifying the special fiber of $\calY$; to construct and compute the formal completion along the special fiber as in Lemma \ref{lemma:ClosuresLocFinInSuppImpliesRaynaudInBerkovich}, we will need to consider some finiteness conditions on $\Phi$. We say that $\smallcpx$ in $N_{\R}$ is \emph{combinatorially locally finite} if every polyhedron in $\smallcpx$ meets only finitely many other polyhedra in $\smallcpx$. An even stronger condition on $\Phi$ would be local finiteness. Let $Z$ be a topological space and $\calA $ a collection of subsets of $Z$. Then we say $\calA$ is \emph{locally fintite} if every point of $Z$ has a neighborhood that meets at most finitely elements of $\calA$. Locally finite implies combinatorially locally finite but the converse is not true as the following example shows.
\begin{example}\label{example:LocFin:CombNotTop}
Let $N=\Z$ and let $\smallcpx$ be the polyhedral complex in $N_{\R}=\R$ whose maximal cells are given by $[\frac{1}{n+1},\frac{1}{n}]$ for $n\in\Z_{>0}$ and the vertex 0. Then $\smallcpx$ is combinatorially locally finite, but is not locally finite at $0$.
\end{example}
\noindent For each $P\in\smallcpx$ set $\affTToricVarPoly{P}:=\affTToricVar{c(P)}$ where $c(P)\subset N_{\R}\times\R_{\geq0}$ is the closed cone over $P$. 
Note that, for $\sigma,\tau\in\halfspaceSmallFan$, $\affTToricVar{\sigma}\cap\affTToricVar{\tau}=\affTToricVar{\sigma\cap\tau}$ and $\affTToricVar{\sigma}$ is contained in the generic fiber of $\calY$ if and only if $\sigma$ is contained in $N_{\R}\times\{0\}$. 
So the special fiber $\calY_s$ of $\calY$ has the affine open cover $\{(\affTToricVarPoly{P})_s\mid P\in\smallcpx\}$ and, for any $P_1,P_2\in\smallcpx$, $(\affTToricVarPoly{P_1})_s$ meets $(\affTToricVarPoly{P_2})_s$ if and only if $P_1$ meets $P_2$. From this we see that when $\Phi$ is combinatorially locally finite in $N_{\R}$ the special fiber $\calY_s$ is quasi-paracompact.
Let $\frakY =\frakY(\Delta)$ be the formal completion of $\calY$ along its special fiber.
When $\Delta$ is locally combinatorially finite, we can consider the generic fiber $\ray{\frakY}$ of $\frakY$ and the natural map $\iota$ from $\ray{\frakY}$ to the analytification of $\ToricVar{\smallfan}$.

In order to study $\iota$ we briefly recall the construction of the (extended) tropicalization map, referring the reader to \cite[\S 3]{LimitOfTrops} for more detail. Denote the generic fiber of $\T$ by $T$ and its character lattice by $M$. So $Y(\Sigma)$ has dense torus $T$. For any $\sigma\in\smallfan$ there is a corresponding open affine subset $\Spec K[S_\sigma]=\affToricVar{\sigma}\subset\toricvar{\smallfan}$, where $S_{\sigma}$ is the submonoid of $M$ determined by $\sigma$.
There is also a continuous map $\trop_\sigma\fncolon\affToricVar{\sigma}^{\an}\to N_{\R}(\sigma)=\Hom_{\mon}(S_\sigma,\nbar{\R})$ defined by sending $x\in \affToricVar{\sigma}^{\an}$ to the homomorphism $S_\sigma\to\nbar{\R}$ given by $u\mapsto-\log|u(x)|$.
The maps $\trop_\sigma$ for $\sigma\in\smallfan$ glue to give a continuous map 
$$\trop\fncolon\toricVar{\smallfan}^{\an}\to N_{\R}(\smallfan),$$ 
called the \emph{tropicalization map}. 
For $\sigma\in\smallfan$ and $w\in N_{\R}(\sigma)\subset N_{\R}(\smallfan)$ we have $\trop_\sigma^{-1}(w)=\trop^{-1}(w)$. Note that $N_{\R} ( \{ 0 \} )= N_{\R}$ and $N_{\R}(\Sigma)$ is a partial compactification of $N_{ \R }$.

For $P\in\smallcpx$, let $\affFormalTToricVar{P}$ be the formal completion of $\affTToricVarPoly{P}$ along its special fiber. 
By \cite[Proposition 6.9]{RabinoffTropicalAnalyticGeometry} and \cite[Proposition 6.19]{GublerGuideTrop}, $\iota$ maps $\ray{(\affFormalTToricVar{P})}$ isomorphically onto the affinoid domain $\trop^{-1}(\cl_{N_{\R}(\smallfan)}(P))\subset\toricVar{\smallfan}^{\an}$.
Here $\cl_{N_{\R}(\smallfan)}$ denotes the closure in $N_{\R}(\smallfan)$.
In the case where $\halfspaceSmallFan$ is finite, Gubler studied $\calY$ and the generic fiber $\ray{\frakY}\cong\trop^{-1}\left(\bigcup_{P\in\smallcpx}\cl_{N_{\R}(\smallfan)}(P)\right)$ in \cite{GublerGuideTrop}. 
Generalizing this to the case where $\Phi$ is locally finite
will be very similar to the proof in the finite case. Let $|\Phi|=\cup_{P\in \Phi}P$. 
We note that we do need to assume that $\Phi$ is locally finite in Lemma \ref{lemma:ClosuresLocFinInSuppImpliesRaynaudInBerkovich} and not just combinatorially locally finite, as the following example shows.

\begin{example}\label{example:CombLocFinDoesntImplyRaynaudInBerkovich}
Let $N=\Z$ and say $\Gamma$ is $\Z$ or $\Q$. Let $\smallcpx$ be as in Example \ref{example:LocFin:CombNotTop}, and let $\halfspaceSmallFan$ be the fan in $N_{\R}\times\R_{\geq0}$ whose maximal cones are $c(P)$ for $P\in\smallcpx$ maximal. Then $\ray{\frakY}$ is the disjoint union of $\trop^{-1}(0)$ and $\trop^{-1}((0,1])$. 
Since $\trop$ is continuous, admits a continuous section \cite[\S 3.3]{TropicalSkeletons}, and has connected fibers, \cite[Ch.\ 1, \S 3.5, Proposition 9 and Ch.\ 1, \S 11.3, Proposition 7]{BourbakiGenTopI} show that $\trop^{-1}([0,1])$ is connected while $\ray{\frakY}$ has two connected components, so they are not isomorphic.
\end{example} 

\begin{lemma}\label{lemma:ClosuresLocFinInSuppImpliesRaynaudInBerkovich}
Suppose that $\calA:=\{\cl_{N_{\R}(\smallfan)}(P)\mid P\in\htone{\halfspaceSmallFan}{N_{\R}}\}$ is locally finite in $|\calA|:=\cup_{A\in\calA}A$. 
Then for any closed subscheme $\calX\subset\TToricVar{\halfspaceSmallFan}$ which is flat over $K^\circ$ with generic fiber $X$ and formal completion $\frakX$ along the special fiber, the natural map $\iota_{\calX}\fncolon\ray{\frakX}\to X^{\an}$ identifies $\ray{\frakX}$ with the analytic domain $\trop^{-1}(|\calA|)\cap X^{\an}$.
\end{lemma}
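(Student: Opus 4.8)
The plan is to prove the statement first for the ambient scheme $\calY := \TToricVar{\halfspaceSmallFan}$ and then to transport the conclusion to the closed subscheme $\calX$ using the final part of Remark \ref{remark:FlatSubschemeCompletion}. Precisely, if I can show that $\calY$ satisfies condition (*) with respect to the cover $\{\affTToricVarPoly{P}\mid P\in\smallcpx\}$ and that $\iota_{\calY}(\ray{\frakY})=\trop^{-1}(|\calA|)$, then Remark \ref{remark:FlatSubschemeCompletion}, applied to the flat closed subscheme $\calX\subset\calY$, gives at once that $\calX$ satisfies (*), that $\iota_{\calX}$ is an isomorphism onto an analytic domain, and that $\iota_{\calX}(\ray{\frakX})=X^{\an}\cap\iota_{\calY}(\ray{\frakY})=X^{\an}\cap\trop^{-1}(|\calA|)$, which is exactly the assertion. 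So all of the work lies in the ambient case.

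For the ambient case I would recall from the discussion preceding the lemma that $\{(\affTToricVarPoly{P})_s\}$ is an affine open cover of the special fiber $\calY_s$, that $(\affTToricVarPoly{P_1})_s$ meets $(\affTToricVarPoly{P_2})_s$ if and only if $P_1\cap P_2\neq\emptyset$, and that $\iota$ carries $\ray{(\affFormalTToricVar{P})}$ isomorphically onto the affinoid domain $\trop^{-1}(\cl_{N_{\R}(\smallfan)}(P))$. Since the $\frakU_P:=\affFormalTToricVar{P}$ cover $\frakY$ (their underlying spaces are the $(\affTToricVarPoly{P})_s$, which cover $\calY_s$), taking images under $\iota$ yields $\iota(\ray{\frakY})=\bigcup_{P}\trop^{-1}(\cl_{N_{\R}(\smallfan)}(P))=\trop^{-1}(|\calA|)$, identifying the underlying set. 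I would also record at this stage that each $\cl_{N_{\R}(\smallfan)}(P)$ is compact: it is the image of the affinoid, hence compact, space $\trop^{-1}(\cl_{N_{\R}(\smallfan)}(P))$ under the continuous map $\trop$, which is surjective onto $N_{\R}(\smallfan)$ because it admits a continuous section.

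It remains to verify the two requirements of condition (*). For the finiteness of intersections, note that $\frakU_{P_1}$ meets $\frakU_{P_2}$ if and only if the special fibers meet, i.e.\ if and only if $P_1\cap P_2\neq\emptyset$, which forces $\cl_{N_{\R}(\smallfan)}(P_1)\cap\cl_{N_{\R}(\smallfan)}(P_2)\neq\emptyset$; covering the compact set $\cl_{N_{\R}(\smallfan)}(P_1)$ by finitely many of the neighborhoods furnished by local finiteness of $\calA$ shows that $\cl_{N_{\R}(\smallfan)}(P_1)$ meets only finitely many members of $\calA$, so $P_1$ meets only finitely many $P_2$. (In particular $\smallcpx$ is combinatorially locally finite, so $\calY_s$ is quasi-paracompact and $\ray{\frakY}$ genuinely exists.) For the quasinet condition, given $x\in\trop^{-1}(|\calA|)$ set $w:=\trop(x)$; I would use the standard point-set fact that in a locally finite family of closed subsets of $|\calA|$ the union of those members not containing $w$ is closed and avoids a neighborhood of $w$, producing an open $O\ni w$ in $|\calA|$ contained in the union of the finitely many $\cl_{N_{\R}(\smallfan)}(P)$ that contain $w$. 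Pulling back along the continuous map $\trop$, the set $\trop^{-1}(O)$ is a neighborhood of $x$ contained in the finite union $\bigcup_{w\in\cl_{N_{\R}(\smallfan)}(P)}\trop^{-1}(\cl_{N_{\R}(\smallfan)}(P))$, every term of which contains $x$; this is precisely the quasinet condition.

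The main obstacle will be the quasinet verification, and underlying it the passage between the tropical base $N_{\R}(\smallfan)$ and $Y^{\an}$ through $\trop$: this is exactly the step where local finiteness of $\calA$, rather than mere combinatorial local finiteness of $\smallcpx$, is indispensable, as Example \ref{example:CombLocFinDoesntImplyRaynaudInBerkovich} demonstrates. The second point requiring care is the compactness of $\cl_{N_{\R}(\smallfan)}(P)$, which is what upgrades the pointwise statement of local finiteness to actual finiteness of the intersections and thereby to quasi-paracompactness; I would justify it, as above, from the affinoidness of $\trop^{-1}(\cl_{N_{\R}(\smallfan)}(P))$ together with the continuity and surjectivity of $\trop$.
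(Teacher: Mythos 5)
Your proposal is correct and follows essentially the same route as the paper: reduce to the ambient scheme via condition (*) of Remark \ref{remark:FlatSubschemeCompletion} with the cover $\{\affTToricVarPoly{P}\mid P\in\htone{\halfspaceSmallFan}{N_{\R}}\}$, and then verify the quasinet condition by pulling back the locally finite closed cover $\calA$ along the continuous map $\trop$. The only difference is that you make explicit two points the paper treats as already known—that each $\affFormalTToricVar{P}$ meets only finitely many others (hence the generic fiber exists), deduced from compactness of the sets $\cl_{N_{\R}(\smallfan)}(P)$, and that a locally finite cover by closed sets is a quasinet—which is a useful clarification rather than a different argument.
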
\begin{proof}
Because $\iota$ maps $\ray{(\affFormalTToricVar{P})}$ onto $\trop^{-1}(\cl_{N_{\R}(\smallfan)}(P))$ for each $P\in\smallcpx$, Remark \ref{remark:FlatSubschemeCompletion} gives us that it suffices to show that $\calY$ satisfies condition (*) with the collection $\{\affTToricVarPoly{P}\mid P\in\smallcpx\}$. 
We already know that this is a collection of affine opens containing the special fiber and that each $\affFormalTToricVar{P}$ only meets finitely many $\affFormalTToricVar{P'}$ for $P'\in\smallcpx$. Thus it remains only to show that 
$$\{\iota(\ray{(\affFormalTToricVar{P})})\mid P\in\smallcpx\}=\{\trop^{-1}(\cl_{N_{\R}(\smallfan)}(P))\mid P\in\smallcpx\}$$ 
is a quasinet on 
$\iota(\ray{\frakY})=\trop^{-1}(|\calA|)\subset\toricVar{\smallfan}^{\an}$. 
Since $\{\cl_{N_{\R}(\smallfan)}(P)\mid P\in\smallcpx\}$ is a locally finite cover of $|\calA|$ by closed subsets, it is a quasinet on $|\calA|$. So because $\trop\fncolon\trop^{-1}(|\calA|)\to|\calA|$ is continuous, $\{\trop^{-1}(\cl_{N_{\R}(\smallfan)}(P))\mid P\in\smallcpx\}$ is a quasinet on 
$\trop^{-1}(|\calA|)$.
\end{proof}

\begin{coro}\label{coro:LocFinImpliesRaynaudInBerkovich}
Suppose that $\smallcpx$ is locally finite in $N_{\R}(\smallfan)$. 
Then for any closed subscheme $\calX\subset\TToricVar{\halfspaceSmallFan}$ which is flat over $K^\circ$ with generic fiber $X$ and formal completion $\frakX$ along the special fiber, the natural map $\iota_{\calX}\fncolon\ray{\frakX}\to X^{\an}$ identifies $\ray{\frakX}$ with the analytic domain $\trop^{-1}(\cl_{N_{\R}(\smallfan)}|\smallcpx|)\cap X^{\an}$.
\end{coro}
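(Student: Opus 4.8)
The plan is to deduce this directly from Lemma~\ref{lemma:ClosuresLocFinInSuppImpliesRaynaudInBerkovich} by checking its hypothesis and rewriting its conclusion. As in that lemma, set $\calA:=\{\cl_{N_{\R}(\smallfan)}(P)\mid P\in\htone{\halfspaceSmallFan}{N_{\R}}\}$, recalling that $\htone{\halfspaceSmallFan}{N_{\R}}=\smallcpx$. It then suffices to establish two things: that $\calA$ is locally finite in $|\calA|$, and that $|\calA|=\cl_{N_{\R}(\smallfan)}|\smallcpx|$. Once these are in hand, Lemma~\ref{lemma:ClosuresLocFinInSuppImpliesRaynaudInBerkovich} applies verbatim and identifies $\ray{\frakX}$ with $\trop^{-1}(|\calA|)\cap X^{\an}=\trop^{-1}(\cl_{N_{\R}(\smallfan)}|\smallcpx|)\cap X^{\an}$, which is exactly the desired conclusion.

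The one topological input I would use is the standard fact that passing to closures preserves local finiteness: if $\{S_i\}$ is a locally finite family of subsets of a topological space $Z$, then so is $\{\cl_Z(S_i)\}$. Indeed, given $x\in Z$, pick an open neighborhood $U$ meeting only $S_{i_1},\dots,S_{i_k}$; any $S_i$ disjoint from $U$ lies in the closed set $Z\sdrop U$, hence so does $\cl_Z(S_i)$, so $U$ meets at most $\cl_Z(S_{i_1}),\dots,\cl_Z(S_{i_k})$. Applying this with $Z=N_{\R}(\smallfan)$ and $\{S_i\}=\smallcpx$, where each polyhedron $P\in\smallcpx$ is viewed as a subset of $N_{\R}\subseteq N_{\R}(\smallfan)$, the hypothesis that $\smallcpx$ is locally finite in $N_{\R}(\smallfan)$ gives that $\calA$ is locally finite in $N_{\R}(\smallfan)$; restricting neighborhoods to the subspace $|\calA|$ then shows $\calA$ is locally finite in $|\calA|$.

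For the second point I would use that a locally finite union of closed sets is closed. Since $\calA$ is a locally finite family of closed subsets of $N_{\R}(\smallfan)$, its union $|\calA|=\bigcup_{P\in\smallcpx}\cl_{N_{\R}(\smallfan)}(P)$ is closed. As $|\calA|$ contains $|\smallcpx|=\bigcup_{P\in\smallcpx}P$, it contains $\cl_{N_{\R}(\smallfan)}|\smallcpx|$; the reverse inclusion is immediate since each $\cl_{N_{\R}(\smallfan)}(P)\subseteq\cl_{N_{\R}(\smallfan)}|\smallcpx|$. Hence $|\calA|=\cl_{N_{\R}(\smallfan)}|\smallcpx|$, completing the reduction to Lemma~\ref{lemma:ClosuresLocFinInSuppImpliesRaynaudInBerkovich}.

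Since the argument is essentially a translation of hypotheses, there is no serious obstacle; the only point demanding care is that all closures and local-finiteness statements are taken in the partial compactification $N_{\R}(\smallfan)$ rather than in $N_{\R}$. This is what makes the conclusion $\trop^{-1}(\cl_{N_{\R}(\smallfan)}|\smallcpx|)$ match the target of the tropicalization map $\trop\fncolon\toricVar{\smallfan}^{\an}\to N_{\R}(\smallfan)$, and it is exactly the local finiteness at the boundary points of $N_{\R}(\smallfan)$ (not merely the combinatorial local finiteness of Example~\ref{example:LocFin:CombNotTop}) that both steps above require.
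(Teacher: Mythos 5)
Your proof is correct and follows essentially the same route as the paper: reduce to Lemma~\ref{lemma:ClosuresLocFinInSuppImpliesRaynaudInBerkovich} by showing the family of closures is locally finite and that $|\calA|=\cl_{N_{\R}(\smallfan)}|\smallcpx|$. The only difference is cosmetic — the paper cites Bourbaki for the fact that the closure of a locally finite union is the union of the closures, whereas you prove the two underlying topological facts by hand.
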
\begin{proof}
Since $\smallcpx$ is locally finite in $N_{\R}(\smallfan)$, so is $\{\cl_{N_{\R}(\smallfan)}(P)\mid P\in\smallcpx\}$. So by Lemma \ref{lemma:ClosuresLocFinInSuppImpliesRaynaudInBerkovich} we only need to show that $\cl_{N_{\R}(\smallfan)}|\smallcpx|=\dcup_{P\in\smallcpx}\cl_{N_{\R}(\smallfan)}(P)$. But this follows from the fact that $\smallcpx$ is locally finite in $N_{\R}(\smallfan)$ \cite[Ch.\ 1, \S 1.5, Proposition 4]{BourbakiGenTopI}.
\end{proof}
Given the hypotheses in Lemma \ref{lemma:ClosuresLocFinInSuppImpliesRaynaudInBerkovich} and Corollary \ref{coro:LocFinImpliesRaynaudInBerkovich}, one might hope that it would be enough to have $\smallcpx$ locally finite in $|\smallcpx|$. As the following example shows, this is not sufficient.

\begin{example}
Let $N=\Z^2$ and say $\Gamma$ is $\Z$ or $\Q$. For any positive integer $n$ let $P_n:=\{(x,y)\in\R^2\mid \frac{1}{n+1}\leq x\leq\frac{1}{n}, y\geq(2n+1)-n(n+1)x\}$. That is, $P_n$ is the polyhedron in $\R^2$ with vertices $(\frac{1}{n},n)$ and $(\frac{1}{n+1},n+1)$ and recession cone $\{(0,y)\in\R^2\mid y\geq0\}$. Let $\smallfan$ be the fan whose maximal cone is $\{(0,y)\in\R^2\mid y\geq0\}$, let $P_0:=\{(0,y)\in\R^2\mid y\geq0\}$, and let $\smallcpx$ be the polyhedral complex whose maximal faces are $P_n$ for $n\geq0$. Then $\smallcpx$ is locally finite in $N_{\R}$, but not in $N_{\R}(\smallfan)$. So if we let $\halfspaceSmallFan$ be the fan in $\R^2\times\R_{\geq0}$ whose maximal cones are $c(P_n)$ for $n\geq0$, then, as in Example \ref{example:CombLocFinDoesntImplyRaynaudInBerkovich}, we find that $\ray{\frakY}$ has two connected components while $\trop^{-1}\left(\bigcup_{n\geq0}\cl_{N_{\R}(\smallfan)}(P_n)\right)$ is connected, so they are not isomorphic.
\end{example}

We now have all of the ingredients we need to prove Theorem \ref{thm:CompleteTheRaynaudFiber}.

\begin{proof}[Proof of Theorem \ref{thm:CompleteTheRaynaudFiber}]

We first consider the case in which $\Gamma$ is discrete or divisible. By \cite[Theorem 1.1]{combopaper} there is a $\Gamma$-rational completion $\completeCpx$ of $\smallcpx$ such that $\{\rec P\mid P\in\completeCpx\}=\smallfan$ and $\completeCpx$ is locally finite in $N_{\R}(\smallfan)$. 
Define $\halfspaceCompleteFan:=\{c(P)\mid P\in\completeCpx\}\cup\{\sigma\times\{0\}\mid\sigma\in\smallfan\}$. Then $\nbar{\Delta}$ is a fan by \cite[Lemma 4.6]{combopaper}. This Lemma applies because for each cone $c(P)$ we have that $c(P)\cap (N_{\R}\times \{0 \})=\rec P$, where $\rec P$ denotes the recession cone of $P$ as in Subsection 2.1 of \cite{combopaper}. Furthermore $\nbar{\Delta}$ is $\Gamma$-admissible and satisfies $\htzero{\halfspaceCompleteFan}{N_{\R}}=\smallfan$ and $\htone{\halfspaceCompleteFan}{N_{\R}}=\completeCpx$.
Letting $\nbar{\calY}$ be the normal $\T$-toric scheme corresponding to $\halfspaceCompleteFan$, because $\Gamma$ is discrete or divisible we have that $\nbar{\calY}$ is locally of finite type over $K^\circ$. 
Since $\halfspaceSmallFan$ is a subfan of $\halfspaceCompleteFan$, there is a $\T$-equivariant open immersion $\calY\into\nbar{\calY}$, and the induced map on generic fibers is an isomorphism because $\htzero{\halfspaceCompleteFan}{N_{\R}}=\htzero{\halfspaceSmallFan}{N_{\R}}$. 
Letting $Y=Y(\smallfan)$ be the common generic fiber of $\calY$ and $\nbar{\calY}$, and letting $\nbar{\frakY}$ be the formal completion of $\nbar{\calY}$ along its special fiber, Corollary \ref{coro:LocFinImpliesRaynaudInBerkovich} tells us that $\iota_{\nbar{\calY}}\fncolon\ray{\nbar{\frakY}}\to Y^{\an}$ identifies $\ray{\nbar{\frakY}}$ with $\trop^{-1}(\cl_{N_{\R}(\smallfan)}|\completeCpx|)=\trop^{-1}(\cl_{N_{\R}(\smallfan)}N_{\R})=\trop^{-1}(N_{\R}(\smallfan))=Y^{\an}$.

Now suppose that $\Gamma$ is neither discrete nor divisible.
By \cite[Theorem 1.1]{GammaAdmissibleCompletions} there is a finite separable totally ramified extension $L/K$ of valued fields such that $\calY_{L^\circ}$ admits a normal $\T_{L^\circ}$-equivariant completion. So by making the base-change to $L^\circ$ we may assume without loss of generality that $\calY$ admits a normal $\T$-equivariant completion, i.e., a $\T$-equivariant open immersion $\calY\into\calY'$ with $\calY'$ a normal $\T$-toric variety which is proper over $K^\circ$. Let $\halfspaceBigFan$ be the finite $\Gamma$-admissible fan in $N_{\R}\times\R_{\geq0}$ corresponding to $\calY'$ and let $\bigcpx:=\htone{\halfspaceBigFan}{N_{\R}}$. Because $\calY'$ is proper over $K^\circ$, \cite[Proposition 11.8]{GublerGuideTrop} tells us that $\halfspaceBigFan$ is complete, so $\bigcpx$ is also complete. As $\Gamma$ is not discrete and $\calY'$ is of finite type over $K^\circ$, $\bigcpx$ is a finite completion of $\smallcpx$ whose vertices are all in $N_{\Gamma}$. Thus, Theorem \cite[Theorem 1.1]{combopaper} tells us that there is a $\Gamma$-rational completion $\completeCpx$ of $\smallcpx$ such that $\{\rec P\mid P\in\completeCpx\}=\smallfan$, $\completeCpx$ is locally finite in $N_{\R}(\smallfan)$, and all of the vertices of $\completeCpx$ are in $N_{\Gamma}$. The remainder of the proof is exactly as in the previous case, with the exception that the justification of the fact that $\nbar{\calY}$ is locally of finite type in this case is that all of the vertices of $\completeCpx$ are in $N_{\Gamma}$.
\end{proof}

Finally, we can prove Theorem \ref{thm:ExistenceOfAlgebraizableFormalModels}.

\begin{proof}[Proof of Theorem \ref{thm:ExistenceOfAlgebraizableFormalModels}]
Let $X$ be a closed subscheme of a normal toric variety $Y$ over $K$.  Letting $T$ be the torus acting on $Y$, $M$ the character lattice of $T$, and $N$ the cocharacter lattice of $T$, we have that $Y=Y(\smallfan)$ for a finite rational fan $\smallfan$ in $N_{\R}$. If we let $\T:=\Spec K^\circ[M]$ then we can also view $Y$ as the $\T$-toric variety associated to $\halfspaceSmallFan:=\{\sigma\times\{0\}\mid\sigma\in\smallfan\}$. 

By \cite[Page 18]{OdaConvexBodiesAndAG} there is a finite rational completion $\bigfan$ of $\smallfan$. 
Considering the fan $\halfspaceBigFan:=\{c(\sigma)\mid \sigma\in\bigfan\}\cup\{\sigma\times\{0\}\mid\sigma\in\bigfan\}$, we have that $\calY(\halfspaceBigFan)$ is a normal equivariant completion of $Y$, viewed as a $\T$-toric variety. 
So by Theorem \ref{thm:CompleteTheRaynaudFiber} there is a normal $\T$-toric scheme $\nbar{\calY}$ locally of finite type over $K^\circ$ and a $\T$-equivariant open immersion $Y\into\nbar{\calY}$ identifying $Y$ with the generic fiber of $\nbar{\calY}$. Furthermore we have that for any closed subscheme $\calX\subset\nbar{\calY}$ which is flat over $K^\circ$ with generic fiber $\calX_K$ and formal completion $\frakX$ along the special fiber, $\iota_{\calX}\fncolon\ray{\frakX}\to(\calX_K)^{\an}$ is an isomorphism.

Let $\calX$ be the closure of $X$ in $\nbar{\calY}$, i.e., the scheme-theoretic image of the inclusion morphism $X\into\nbar{\calY}$. Then $\calX$ is a closed subscheme of $\nbar{\calY}$ which is flat over $K^\circ$ and has generic fiber $X$ \cite[Remark 4.6]{GublerGuideTrop}. So $\iota_{\calX}\fncolon\ray{\frakX}\to X^{\an}$ is an isomorphism. Thus $\frakX$ is an algebraizable formal model of $X^{\an}$.
\end{proof}

\bibliographystyle{alpha}
\bibliography{FormalModels}


\end{document}